\documentclass[reqno, 12pt]{amsart}
\usepackage[mathscr]{eucal}
\usepackage{amscd}
\usepackage{color}
\usepackage{amsfonts}
\usepackage{amsmath, amsthm, amssymb}
\usepackage{latexsym}
%
%
%
%

 \newtheorem{thm}{Theorem}[section]
 \newtheorem{cor}[thm]{Corollary}
 \newtheorem{lem}[thm]{Lemma}
 \newtheorem{prop}[thm]{Proposition}
 \newtheorem{defn}[thm]{Definition}
 \newtheorem{rem}[thm]{Remark}
 
 \numberwithin{equation}{section}

\def\R{{\Bbb R}}
\def\a{{\alpha}}
\def\la{{\lambda}}
\def\pl{{\partial}}
\def\R{{\mathbb R}}
\def\N{{\Bbb N}}

\def\no{{\nonumber}}

\def\bge{\begin{eqnarray}}
\def\bgee{\begin{eqnarray*}}
\def\ege{\end{eqnarray}}
\def\egee{\end{eqnarray*}}
\newcommand{\dd}{{\mathrm{d}}}
\newcommand{\ee}{{\mathrm{e}}}

\newcommand{\calF}{{\mathcal{F}}}

\newcommand{\dif}[2]{\frac{\dd #1}{\dd #2}}

\newcommand{\I}[1]{{\, \dd #1}}

\newcommand{\Ir}{{\, \dd r}}

\newcommand{\Ix}{{\, \dd x}}

\newcommand{\vr}{v_r}
\newcommand{\vrr}{v_{rr}}

\newcommand{\vt}{v_t}

\newcommand{\zrr}{z_{rr}}

\newcommand{\zt}{z_t}

\newcommand{\jr}{J_r}
\newcommand{\jrr}{J_{rr}}
\newcommand{\jt}{J_t}

\newcommand{\jti}{{\tilde{J}}}
\newcommand{\jtir}{\jti_r}
\newcommand{\jtirr}{\jti_{rr}}
\newcommand{\jtit}{\jti_t}


\setlength{\oddsidemargin}{0mm} \setlength{\evensidemargin}{0mm}
\setlength{\topmargin}{-0.3cm} \setlength{\headheight}{18pt}
\setlength{\headsep}{12pt} \setlength{\textwidth}{6.4truein}
\setlength{\textheight}{9.2truein}

\def\bea{\bge}
\def\eea{\ege}


\begin{document}
\title[Quenching of  nonlocal parabolic MEMS equation]
{On the quenching of a nonlocal parabolic problem arising in electrostatic MEMS control}

\author{Nikos I. Kavallaris}

\address{Department of Mathematics, University of Chester, Thornton Science Park,
Pool Lane, Ince, Chester  CH2 4NU, UK}
\email{n.kavallaris@chester.ac.uk}

\author{Andrew A. Lacey}
\address{Maxwell Institute for Mathematical Sciences \& 
School of Mathematical and Computer Sciences,
Heriot-Watt University,
Riccarton, Edinburgh, EH14 4AS, UK
}

\email{A.A.Lacey@hw.ac.uk}

\author{Christos V. Nikolopoulos}
\address{Department of Mathematics, University of Aegean,
Gr-83200 Karlovassi, Samos, Greece
}

\email{cnikolo@aegean.gr}

\subjclass{Primary 35K55, 35J60; Secondary 74H35, 74G55, 74K15}

\keywords{Electrostatic MEMS, touchdown, quenching.}

\date{\today}

\begin{abstract}

We consider a nonlocal parabolic model for a micro-electro-mechanical system.
Specifically, for a radially symmetric problem with monotonic initial
data, it is shown that the solution quenches, so that touchdown occurs
in the device, in a situation where there is no steady state. It is also
shown that quenching occurs at a single point and a bound on the approach
to touchdown is obtained. Numerical simulations illustrating the results
are given.

\end{abstract}

\maketitle
\section{Introduction}
\setcounter{equation}{0}
The main purpose of the current work is to investigate
a singular mathematical behaviour, called {\it quenching},
of the solutions of the following non-local parabolic problem:
\bea
& & u_t-\Delta u ={\displaystyle\frac{\lambda}{\left( 1-u \right)^2
\left(1+\a \int_{\Omega}
\frac{1}{1-u} \I x\right)^2}}
\quad \mbox{in}\quad Q_T:=\Omega\times (0,T), \label{eqn:1}\\
& & u=0 \quad \mbox{on}\quad \partial \Omega\times (0,T), \label{bc}\\
& & u(x,0)=u_0(x), \quad x\in\Omega,\label{ic}
\eea
where $\lambda,\a$ are positive constants, $T>0$,
$\Omega\subset \R^N$ is a bounded domain with smooth boundary $\partial\Omega$,
and $u_0$ is a continuous function in $\overline{\Omega}$ such that
$0\le u_0<1$.

The motivation for studying problem \eqref{eqn:1}-\eqref{ic} is that
it arises as a mathematical model which describes operation of
some electrostatic actuated micro-electro-mechanical systems (MEMS).
The term ``MEMS" more precisely refers to precision devices
which combine mechanical processes
with electrical circuits. In particular, electrostatic actuation is
a popular application of MEMS. MEMS devices range in size from
millimetres down to microns, and involve precision mechanical
components that can be constructed using semiconductor
manufacturing technologies. Various electrostatic actuated MEMS
have been developed and used in a wide variety of devices applied as sensors
and have fluid-mechanical, optical, radio frequency (RF),
data-storage, and biotechnology applications.
Examples of microdevices of this kind include microphones,
temperature sensors, RF switches, resonators, accelerometers,
micromirrors, micropumps, microvalves, data-storage devices etc.,
\cite{EGG10,JAP-DHB02,y}.

The principal part of such a electrostatic actuated
MEMS device usually consists of an
elastic plate suspended above a rigid ground plate.
Typically the elastic plate (or membrane) is held
fixed at two ends while the other two edges remain free to move.
An alternative configuration could entail the plate or membrane
being held fixed around its entire edge. When a potential difference
$V_d$ is applied between the membrane and the plate,
the membrane deflects towards the ground plate. Under
the realistic assumption that the width of the gap, between the membrane
and the bottom plate, is small compared to the
device length, and when the configuration of
the two parallel plates is connected in series with
a fixed voltage source and a fixed capacitor, then
the deformation of the elastic membrane $u$ is
described by the dimensionless equation \eqref{eqn:1},
see \cite{PT01, JAP-DHB02}. In \eqref{eqn:1}
$$
\la=\frac{V_d^2 L^2 \varepsilon_0}{2\mathcal{T}l^2},
$$
$\mathcal{T}$ is the tension in the membrane,
$L$ the characteristic length (diameter) of the domain
$\Omega,\;$ $l$ the characteristic width of the gap
between the membrane and the fixed ground plate (electrode),
and $\varepsilon_0$ the permittivity of free space.
The integral in \eqref{eqn:1} arises from the fact that
the device is embedded
in an electrical circuit with a capacitor of fixed capacitance.
The parameter $\alpha$ denotes the ratio of this fixed capacitance
to a reference capacitance of the device. Without loss of generality,
we may assume that $\alpha=1$.
(The limiting case $\alpha=0$ corresponds to the configuration
where there is no capacitor in the circuit.)
If the edges of the membrane are kept fixed then
Dirichlet boundary conditions of the form
(\ref{bc}) are imposed. It is usually supposed that
the elastic membrane is initially at rest, so that
$u(x,0)\equiv 0.$ However, in this work, we consider
more general non-negative initial conditions
$u(x,0)=u_0(x)\geq 0$. For a more detailed derivation of \eqref{eqn:1}
see \cite{PT01, JAP-DHB02}.

From many experiments it is clear that the applied voltage $V_d$
controls the operation of the MEMS device. It is observed that when $V_d$
exceeds a critical threshold $V_{cr}$, called the
{\it pull-in voltage}, then the phenomenon of
{\it touch-down} (or {\it pull-in instability} as it is also known
in MEMS literature) occurs when the elastic membrane touches
the rigid ground plate.
For the mathematical problem \eqref{eqn:1}-\eqref{ic},
this means that there is some critical value
$\la_{cr}$ of the parameter
$\la$ above which singular behaviour should be anticipated.
Focusing on the nonlinear term of problem \eqref{eqn:1},
one can notice that such singular behaviour is possible only when
$u$ takes the value $1$, a phenomenon  known in the mathematical literature
as {\it quenching}, see also Section~\ref{fq}.
From the point of view of applications it is important
to determine whether quenching occurs and, if it does,
to clarify when, how and where it might happen.

\

When $\alpha=0$ we obtain the following local (standard) parabolic problem,
\bgee\label{local}
& & u_t-\Delta u= \frac{\la }{(1-u)^2} \quad \mbox{in} \quad Q_T \, , \\
& & u=0 \quad \mbox{on}\quad \partial \Omega\times (0,T) \, , \\
& & u(x,0)=u_0(x) \quad \mbox{for} \quad x\in\Omega \, ,
\egee
whose quenching behaviour has been extensively studied in the papers
\cite{FMPS07, GG08, YG-ZP-MJW06,  G08,  KMS08}.

\

The quenching behaviour of the solutions of \eqref{eqn:1}-\eqref{ic}
has been also studied in \cite{GHW08,gk12, H11} but questions regarding : \\
$(i)$ occurrence of quenching for $\lambda>\lambda^*$
where $\la^*$ is defined by \eqref{crl}; \\
$(ii)$ determination of the quenching rate; \\
$(iii)$ establishment of the form of the quenching set; \\
were left open.
These questions are addressed in the current work for
radially symmetric problems with the extra assumption that the
initial data decreases with distance from the centre of the domain.

It is worth mentioning that the problem
\eqref{eqn:1}-\eqref{ic} shares some common features with a
non-local problem which exhibits blow-up and models control of mass, and
which was investigated in \cite{hy95, QS07}. However, the methods
we use in this paper to examine finite-time quenching are rather different
from the those of \cite{hy95, QS07}.
\

The structure of the paper is as follows. In Section~\ref{ss}
a brief study of corresponding steady-state problem is presented;
this will be used for proving the occurrence of finite-time quenching
in Section~\ref{fq}. Section~\ref{spq} is devoted to determination
of the form of the quenching set
and a bound on the quenching rate. Finally Section \ref{nr}
presents some numerical results confirming some of the analytical results
obtained in the preceding sections.


\section{Steady-State Problem}\label{ss}

\setcounter{equation}{0}


In this section a brief study of the steady-state problem corresponding
to \eqref{eqn:1}-\eqref{ic} is provided. This problem has the form
\begin{equation}\label{nss}
\Delta w+ \frac{\la}{(1-w)^2 \Big(1+\int_{\Omega} \frac{\dd x}{1-w}
\Big)^2}=0,\;\;x\in \Omega,\quad w=0,\;\; x\;\in \pl \Omega,
\end{equation}
where we always have $0\leq w <1$ in $\overline{\Omega}$
for a (classical) solution of \eqref{nss}.

Let
\bge\label{crl}
\la^*:=\sup\{\la>0:\;\; \mbox{problem \eqref{nss} admits a classical solution}\},
\ege
then $\la^*<\infty$ for any dimension $N\geq 1,$ see \cite{GHW08, gk12}.
For more on the structure of the solution set of  \eqref{nss} see \cite{gk12}.

For the purposes of the current work we will need the notion
of a weak solution of problem \eqref{nss}. In particular we define
the following form of weak solution for \eqref{nss}.

\begin{defn}\label{def0}
A function $w\in H_0^1(\Omega)$ is called a {\bf weak finite-energy solution}
of \eqref{nss} if there exists a sequence $\{w_j\}_{j=1}^{\infty}
\in C^2(\Omega)\cap C_0(\Omega)$ satisfying as $j\to \infty$
\bge
&& w_j\rightharpoonup w \quad\mbox{weakly in}
\quad H_0^1(\Omega)\label{ws1} \, , \\
&& w_j\to w\quad \mbox{a.e.,}\label{ws2} \\
&& \frac{1}{(1-w_j)^2}\to \frac{1}{(1-w)^2}\quad\mbox{in}
\quad L^1(\Omega)\label{ws3} \, , \\
&& \frac{1}{(1-w_j)}\to \frac{1}{(1-w)}\quad\mbox{in}\quad L^1(\Omega)\label{ws4}
\ege
and
\bge
\Delta w_j+ \frac{\la}{(1-w_j)^2 \Big(1+\int_{\Omega}
\frac{\dd x}{1-w_j} \Big)^2}\to 0 \quad\mbox{in}\quad L^2(\Omega)\label{ws5}.
\ege
\end{defn}

It follows that any weak finite-energy solution of \eqref{nss} also satisfies
\bgee
-\int_{\Omega} \nabla \phi \cdot \nabla w \I x +
\la\frac{\int_{\Omega} \frac{\phi}{(1-w)^2}\I x}{\Big(1+\int_{\Omega}
\frac{\dd x}{1-w}\Big)^2} =0\quad \mbox{for all}\quad \phi \in H_0^1(\Omega),
\egee
i.e. it is  a weak $H^1_0(\Omega)-$solution of \eqref{nss} as well,
see also \cite{ZH12}.

Set
\bgee
\widehat{\la}:=\sup\left\{\la>0:\;\; \mbox{problem \eqref{nss}
admits a weak finite-energy solution}\right\}.
\egee

We now restrict our discussion to radially symmetric problems, so
that we may take $\Omega=B_1=B_1(0)=\{x\in \R^N: ||x||_2 <1\}$ with
solutions which are decreasing in $r = ||x||_2$.
The relation between $\la^*$ and $\widehat{\la}$ is then
provided by the following:

\begin{prop}\label{sp1}
For radially symmetric problems, with radially decreasing solutions,
the suprema of the spectra of the classical and
weak problems are identical: $\la^*=\widehat{\la}.$
\end{prop}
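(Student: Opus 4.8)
The plan is to establish the two inequalities $\widehat{\la}\ge\la^*$ and $\widehat{\la}\le\la^*$ separately. The first is immediate: every classical solution of \eqref{nss} is a weak finite-energy solution (take the constant sequence $w_j\equiv w$, for which \eqref{ws1}--\eqref{ws5} hold trivially), so the set defining $\la^*$ is contained in the set defining $\widehat{\la}$, whence $\la^*\le\widehat{\la}$. The whole content of the proposition is therefore the reverse inequality, and this is where the radial, monotone structure must be exploited.

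For $\widehat{\la}\le\la^*$ it suffices to show that for every $\la<\widehat{\la}$ the problem \eqref{nss} admits a classical solution, i.e. that a weak finite-energy solution $w$ at parameter $\la$ is in fact smooth and satisfies $\|w\|_\infty<1$. First I would record that, since $w$ is radially decreasing, $\sup_{\overline{B_1}}w=w(0)$, so the only possible singularity is at the origin and the only way classical regularity can fail is if $w(0)=1$. The strategy is to rule this out. Writing the equation in radial form, $w$ satisfies $(r^{N-1}w_r)_r=-\la\,r^{N-1}(1-w)^{-2}\big(1+\int_{B_1}(1-w)^{-1}\big)^{-2}$; integrating once from $0$ to $r$ and using that the constant $C_w:=\big(1+\int_{B_1}(1-w)^{-1}\,dx\big)^{-2}\in(0,1]$ is finite by \eqref{ws4}, one gets a first-order ODE/integral identity for $w$ on $(0,1]$ with an integrable right-hand side (here \eqref{ws3} guarantees $(1-w)^{-2}\in L^1(B_1)$, so $r^{N-1}(1-w)^{-2}\in L^1(0,1)$). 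From this one obtains that $w_r$ is bounded away from the origin, $w\in C^1((0,1])$, and then bootstrapping (the right side being continuous where $w<1$) gives $w\in C^2$ on any interval where $w<1$; so everything reduces to the behaviour at $r=0$.

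The crux — and the step I expect to be the main obstacle — is excluding $w(0)=1$. The natural approach is a comparison/monotonicity argument: one compares $w$ with the classical solutions $w_\mu$ of \eqref{nss} for $\mu<\la$, using that for $\mu$ small \eqref{nss} certainly has a (minimal, radially decreasing) classical solution and that along the minimal branch one has monotone dependence on the parameter. One shows that if a weak finite-energy solution exists at $\la$, then the minimal classical branch cannot have "turned back" before $\la$, so that $\la\le\la^*$; equivalently, a finite-energy bound forces $w(0)<1$ because the energy $\tfrac12\int_{B_1}|\nabla w|^2$ together with the $L^1$ control \eqref{ws3}--\eqref{ws4} is incompatible, in the radial monotone class, with $w$ touching $1$ — near a touchdown point in one "radial" variable the gradient of $(1-w)^{-1}$ type quantities blows up too fast to remain in $L^1$ unless $N$ is large, and a more careful local analysis (Hardy-type inequality in the radial variable, or the explicit singular profile $1-w\sim c\,r^{2/3}$ known from the local MEMS problem) shows the finite-energy condition $w\in H^1_0$ plus \eqref{ws3} rules out $w(0)=1$ for all $N\ge1$. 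Once $w(0)<1$ is secured, $w$ is a classical solution, $\la\le\la^*$, and letting $\la\uparrow\widehat{\la}$ gives $\widehat{\la}\le\la^*$, completing the proof. I would expect the actual argument in the paper to package this via the minimal-branch comparison rather than the direct energy estimate, since that keeps the dependence on $N$ uniform and avoids delicate borderline Sobolev computations.
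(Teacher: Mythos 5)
The easy direction $\la^*\le\widehat{\la}$ is handled the same way as in the paper. For the reverse inequality, however, your proposed strategy diverges from the paper's in a way that creates a genuine gap. You propose to show that every weak finite-energy solution at a subcritical $\la$ is in fact classical, and your key claim is that the finite-energy condition $w\in H^1_0$ together with \eqref{ws3}--\eqref{ws4} \emph{rules out} $w(0)=1$ for all $N\ge 1$. That claim is false for $N\ge 2$. Near a touchdown at the origin the expected profile is $1-w\sim c\,r^{2/3}$, for which $|\nabla w|\sim r^{-1/3}$, $(1-w)^{-2}\sim r^{-4/3}$ and $(1-w)^{-1}\sim r^{-2/3}$. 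All three radial integrals $\int_0^1 r^{N-1-2/3}\,dr$, $\int_0^1 r^{N-1-4/3}\,dr$, $\int_0^1 r^{N-1-2/3}\,dr$ converge as soon as $N\ge 2$, so for $N\ge2$ there is nothing preventing a weak finite-energy solution with $w(0)=1$; only in $N=1$ does $\int (1-w)^{-2}$ diverge and force $w(0)<1$. Your ``Hardy-type'' or ``explicit singular profile'' argument would actually show the opposite of what you want in dimension $\ge 2$.

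The paper does not attempt to exclude $w(0)=1$. It concedes that $w_1<1$ for $r>0$ (a first dichotomy argument you also supply) and then, for $N\ge 2$, invokes the Joseph--Lundgren-type shooting analysis from \cite{gk12,JL73}: solving the ODE in $r>0$, there is exactly one limiting value $\la_*$ of $\la$ for which the regular solution reaches $w(0)=1$, and this limiting value satisfies $\la_*\le\la^*$ (with equality or strict inequality depending on $N$). So singular weak solutions do exist, but only at a $\la$ already inside the closure of the classical spectrum, which is why $\widehat{\la}=\la^*$. Your fallback idea of ``the minimal classical branch cannot have turned back before $\la$'' is in the right spirit, but you present it as \emph{equivalent} to excluding $w(0)=1$, and it is not; to make the argument go through you would need the explicit bifurcation-diagram information from \cite{JL73,gk12} rather than an energy obstruction.
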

\begin{proof}

Since any classical solution of \eqref{nss} is also
a weak finite-energy solution, $\la^*\leq \widehat{\la}$.

On the other hand, we can take $\la_1$ arbitrarily close to
$\widehat{\la}$ so that there is a weak finite-energy solution
$w_1$ for $\la = \la_1$. Since $w_1$ is decreasing with $0 \le w \le 1$,
either $w < 1$ for $0 < r \le 1$ or there is some $s>0$ such that
$w=1$ for $0 \le r < s$. In the latter case $\int_\Omega (1-w)^{-1} \Ix$
becomes infinite so that $w$ is then a weak finite-energy solution
of $\Delta w = 0$ satisfying $0 \le w \le 1$, as well as the
boundary condition $w=0$, giving $w \equiv 0$. We must then have
$w_1(r) < 1$ for $r > 0$ and it follows that $w_1$ is regular
for $r > 0$.

For $N=1$, simple integration now gives that the solution is
classical. For $N \ge 2$, following now \cite{gk12} (see also
\cite{JL73}), the (classical) problem can be solved in
$r>0$ to find that there is precisely one limiting value of $\la$,
say $\la_*$, for which $w(0) = 1$ and $w$ is then
a weak finite-energy solution but not classical. Depending
upon the value of $N$, $\la_* < \la^*$ or $\la_* = \la^*$.
In either case, $\la^* = \widehat{\la}$.
\end{proof}


\section{Finite-Time Quenching}\label{fq}

\setcounter{equation}{0}


Local-in-time existence of a solution to problem \eqref{eqn:1}-\eqref{ic}
is established in Corollary 2.4 of \cite{gk12} by constructing
a proper lower-upper pair or solutions. Moreover the solution $u$
exists as long as $u<1$ and it ceases to exist once $u$ reaches 1.
In particular we have:
\begin{defn}
The solution $u(x,t)$ of problem \eqref{eqn:1}-\eqref{ic} quenches at
some point $x^*\in \Omega$
in finite time $0<T_q<\infty$ if there exist sequences
$\{x_n\}_{n=1}^{\infty}\in \Omega $ and
$\{t_n\}_{n=1}^{\infty}\in (0,\infty)$ with $x_n\to x^*$ and $t_n\to T_q$
as $n\to\infty$ such
that $u(x_n,t_n)\to 1-$ as $n\to\infty$. In the case where
$T_q=\infty$ we say that $u(x,t)$ quenches in infinite time at $x^*.$

The set
\bgee
\mathcal{Q} = \Big\{x^*\in\overline{\Omega} &\Big|&
	\mbox{there exists a sequence $(x_k,t_k)_{k\in\N} \subset
	\Omega\times (0,T_q)$ such that } \\[0mm]
	& & x_k\to x^*, t_k\to T_q \mbox{ and } u(x_k,t_k)\to 1
	\mbox{ as } k\to\infty \Big\}
\egee
is called the quenching set.
\end{defn}
Some finite-time quenching results for large values of the parameter
$\lambda$ have been proved in \cite{GHW08, H11}, while the authors
in \cite{gk12} proved a quenching result for initial data
$u_0$ quite close to $1.$

In the current section we are working towards the improvement
of the preceding quenching results under some circumstances.
Before we proceed with the proof of our quenching results
we will present some auxiliary lemmata.

First of all we point out that problem \eqref{eqn:1}-\eqref{ic}
admits an energy functional of the form
\bge\label{eng}
E[u](t)\equiv E(t)=\frac{1}{2}\int_{\Omega} |\nabla u|^2 \I x +
\frac{\lambda}{\left(1+\int_{\Omega}(1-u)^{-1} \I x\right)}>0,
\ege
which decreases with respect to time for a solution of problem
\eqref{eqn:1}-\eqref{ic}. More precisely,
\bge\label{deng}
\dif Et = - \int_{\Omega} u_t^2(x,t) \I x < 0 \quad \mbox{for} \quad t>0\;,
\ege
or, equivalently,
\bge\label{deng1}
E(t) = \frac{1}{2}\int_{\Omega} |\nabla u|^2 \I x
+ \frac{\lambda}{\left(1+\int_{\Omega}(1-u)^{-1} \I x\right)}\leq E(0)
= E_0<\infty.
\ege
A key estimate for  proving our quenching result is given by
the following:
\begin{lem}\label{est1}
Let $u$ be a global-in-time solution of problem \eqref{eqn:1}-\eqref{ic}.
Then there is a sequence $\{t_j\}_{j=1}^{\infty}\uparrow \infty$
as $j\to \infty$ such that
\bge\label{tdc}
\la\int_{\Omega} u_j(1-u_j)^{-2} \I x\leq C_1 H^2(u_j),
\ege
for some positive constant $C_1,$ where $u_j=u(\cdot, t_j)$ and
\begin{equation} \label{def:H}
H(u_j) := 1+\int_{\Omega}(1-u_j)^{-1} \I x > 0.
\end{equation}
\begin{proof}
Suppose that the problem \eqref{eqn:1}-\eqref{ic} has a global-in-time
solution $u(x,t)=u(x,t;\la)$. Then,
multiplying equation (\ref{eqn:1}) by $u$ and integrating over $\Omega$,
we derive
\bge\label{q1}
\int_{\Omega} u\,u_t \Ix & = & \int_{\Omega} u{\displaystyle\left[\Delta u
+\frac{\la(1-u)^{-2}}{\Big(1 + \int_{\Omega}(1
- u)^{-1} \Ix \Big)^2}\right]} \I x\nonumber \\
& = & - \int_{\Omega}|\nabla u|^2 \I x + \frac{\la\int_{\Omega}
u(1-u)^{-2} \I x}{\Big(1 + \int_{\Omega}(1-u)^{-1} \I x \Big)^2} \nonumber \\
& = & - 2 E(t) + \frac{2\la}{1+\int_{\Omega}(1-u)^{-1} \I x}
+ \frac{\la\int_{\Omega} u(1-u)^{-2} \I x}{\Big(1
+ \int_{\Omega}(1-u)^{-1} \I x \Big)^2} \, ,
\ege
using also integration by parts and relation (\ref{eng}).

By virtue of H{\"o}lder's inequality and \eqref{deng1},
\eqref{q1} implies
\bge\label{q2}
\la\int_{\Omega} u(1-u)^{-2} \Ix & = & 2 E(t)H^2(u)-2\la H(u)
+ H^2(u)\,\int_{\Omega} u\,u_t \Ix \nonumber \\
& \leq & 2 E_0 H^2(u)+||u(\cdot,t)||_2\,||u_t(\cdot,t)||_2\,H^2(u) \nonumber \\
& \leq & 2 E_0 H^2(u)+|\Omega|^{1/2}\,||u_t(\cdot,t)||_2\,H^2(u).
\ege
On the other hand, the energy dissipation formula \eqref{deng} reads
\bgee
0\leq \int_{\tau}^t\int_{\Omega} u_t^2(x,s) \Ix \I s = E(\tau) - E(t)
\egee
and thus from \eqref{deng1} we deduce that
\bge\label{disp}
\int_{\tau}^\infty\int_{\Omega} u_t^2(x,s) \Ix \I s \leq C < \infty,
\ege
where the constant $C$ is independent of $\tau$.

Now \eqref{disp} yields the existence of a sequence
$\{t_j\}_{j=1}^{\infty}\uparrow \infty$ such that
\bge\label{tdc0}
\,||u_t(\cdot,t_j)||^2_2 = \int_{\Omega} u_t^2(x,t_j) \Ix \to
0 \quad \mbox{as} \quad t_j\to \infty,
\ege
and thus by virtue of \eqref{q2}
\bge\label{tdc1}
\la\int_{\Omega} u_j(1-u_j)^{-2} \Ix \leq C_1 H^2(u_j),
\ege
for some $C_1>0$.
\end{proof}
\end{lem}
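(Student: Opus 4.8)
The plan is to test equation \eqref{eqn:1} against the solution $u$ itself, rearrange the resulting identity so that $\la\int_{\Omega}u(1-u)^{-2}\Ix$ stands isolated, and then exploit the finiteness of the total energy dissipation to extract a favourable time sequence.

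First I would multiply \eqref{eqn:1} by $u$, integrate over $\Omega$, and integrate by parts in the Laplacian term (the boundary term vanishes since $u=0$ on $\partial\Omega$, and $\la/H^2(u)$ is constant in $x$), obtaining
\[
\int_{\Omega}u\,u_t\Ix = -\int_{\Omega}|\nabla u|^2\Ix + \frac{\la\int_{\Omega}u(1-u)^{-2}\Ix}{H^2(u)}\,.
\]
The key is to recognise that $\tfrac12\int_{\Omega}|\nabla u|^2\Ix + \la/H(u)$ is exactly the energy $E(t)$ of \eqref{eng}, so that $-\int_{\Omega}|\nabla u|^2\Ix = -2E(t) + 2\la/H(u)$; substituting and multiplying through by $H^2(u)$ gives the identity
\[
\la\int_{\Omega}u(1-u)^{-2}\Ix = 2E(t)H^2(u) - 2\la H(u) + H^2(u)\int_{\Omega}u\,u_t\Ix\,.
\]

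Next I would bound the right-hand side from above. Since $0\le u<1$ the energy monotonicity \eqref{deng1} gives $E(t)\le E_0$, the term $-2\la H(u)$ is negative and can simply be discarded, and Cauchy--Schwarz together with $\|u(\cdot,t)\|_2\le|\Omega|^{1/2}$ bounds $\int_{\Omega}u\,u_t\Ix$ by $|\Omega|^{1/2}\|u_t(\cdot,t)\|_2$. This yields
\[
\la\int_{\Omega}u(1-u)^{-2}\Ix \le \bigl(2E_0 + |\Omega|^{1/2}\|u_t(\cdot,t)\|_2\bigr)H^2(u)
\]
for every $t>0$.

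Finally, integrating the dissipation identity \eqref{deng} in time and using $E\ge 0$ shows that $\int_0^\infty\|u_t(\cdot,s)\|_2^2\,\I s\le E_0<\infty$, so $\|u_t(\cdot,t)\|_2^2$ cannot remain bounded away from $0$; hence there is a sequence $t_j\uparrow\infty$ along which $\|u_t(\cdot,t_j)\|_2\to 0$. Evaluating the previous inequality at $t=t_j$, the prefactor $2E_0+|\Omega|^{1/2}\|u_t(\cdot,t_j)\|_2$ is eventually bounded by a constant $C_1$, which is precisely \eqref{tdc}. I do not expect a genuine analytic obstacle here: the only points needing care are that the global-in-time hypothesis is what keeps $u<1$ and all the relevant integrals finite on the whole half-line, and that the sought sequence is produced not by pointwise decay of $u_t$ but merely by the time-summability of $\|u_t(\cdot,t)\|_2^2$, which is the standard way to exploit a finite-energy-dissipation bound.
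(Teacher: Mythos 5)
Your proposal is correct and follows essentially the same line of argument as the paper: test the equation against $u$, rewrite $-\int|\nabla u|^2$ via the energy functional, multiply by $H^2(u)$, bound $E(t)\le E_0$ and the $\int u\,u_t$ term by Cauchy--Schwarz, then use finite total dissipation $\int_0^\infty\|u_t\|_2^2\,\I t<\infty$ to extract $t_j\uparrow\infty$ with $\|u_t(\cdot,t_j)\|_2\to 0$. No meaningful deviation from the paper's proof.
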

The next step is to provide another key estimate for $H(u)$
which will allow us not only to prove finite-time quenching
but also to characterize the form of the quenching set.
However, such an estimate according to our method can
be only obtained for the radial symmetric case,
i.e. when $\Omega=B_1$. Then problem \eqref{eqn:1}-\eqref{ic}
in $N$ dimensions is written as
\bea\label{eqn:r}
& & u_t-\Delta_r u =F(r,t),
\quad (r,t)\in(0,1)\times (0,T), \label{eqn:r1}\\
& & u_r(0,t)=u(1,t)=0, \quad t\in (0,T), \label{rbc}\\
& & 0\leq u(r,0)=u_0(r)<1, \quad0<r<1,\label{ric}
\eea
where $\Delta_r u:=u_{rr}+(N-1)r^{-1}u_r$ for $N\geq 1$ and
\bge\label{ps}
F(r,t) = \la k(t)(1-u(r,t))^{-2},
\ege
for
\bgee
k(t) = \left( 1 + N \omega_N\int_{0}^1 r^{N-1}\,(1-u(r,t))^{-1} \Ir \right)^{-2},
\egee
where $\omega_N=|B_1|=\pi^{N/2}/\Gamma(N/2)$ stands for the volume
of the $N$-dimensional unit sphere $B_1(0)$ in $\R^N$ and
$\Gamma$ is the gamma function.

Condition $u_r(0,t)=0,$ for $N\geq 1$ is imposed
to guarantee the regularity of the solution $u.$
If we consider radial decreasing initial data $u_0(r),$ i.e.,
$u_0'(r)\leq 0,$ then it is a standard result that
the monotonicity property is inherited
by $u$ so that $u_r (r,t)\le 0$ for $r>0$ and $t>0$.

For the sake of simplicity  we  obtain the desired estimate for
$H(1-v)$ where $v$ is defined as $v:=1-u.$ Then $v\to 0+$ if $u\to 1-$.
Moreover $v$ satisfies
\bge
&&\vt - \vrr - (N-1)r^{-1}\vr = -fv^{-2}, \quad (r,t)\in(0,1)\times (0,T),
\label{eqn:basic}\\
&& v_r(0,t)=0,\; v(1,t)=1, \quad t\in (0,T),\label{bc:u:centre}\\
&& 0< v(r,0)=v_0(r)\leq 1, \quad0<r<1,
\label{ic:u}
\ege
where
\begin{equation}
f = f(t) = \frac\lambda {(1 + N \omega_N \int_0^1 r^{N-1} v^{-1} \I r )^2} .
\label{eqn:fdef}
\end{equation}
Then we have the following:
\begin{lem}\label{est2}
Consider symmetric and radial increasing initial data $v_0(r)$.
Then for any $k>2/3$ there exists a positive constant $C(k)$ such that
\bge\label{ps0a}
1-u(r,t)\geq C(k) r^k\quad\mbox{for}\quad (r,t)\in (0,1)\times (0,T_{max})
\ege
where $T_{max}$ is the maximum existence time of solution $u.$

Furthermore, there exists $C_2$ uniform in
$\lambda$ and independent of time $t$ such that
\bge\label{ps0}
H(u) = H(1-v) \leq C_2\quad\mbox{for any}\quad 0<t<T_{max}.
\ege
\end{lem}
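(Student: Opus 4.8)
The plan is to establish the pointwise lower bound \eqref{ps0a} first by a barrier argument, and then to use it to deduce the uniform integrability bound \eqref{ps0}. For the lower bound, I would look for a subsolution of \eqref{eqn:basic}–\eqref{ic:u} of the form $\underline v(r) = C(k)\,r^k$, independent of time. Since $\underline v$ does not depend on $t$, we have $\underline v_t = 0$, and a direct computation gives $-\underline v_{rr} - (N-1)r^{-1}\underline v_r = -C(k)k\big(k-1 + N-1\big) r^{k-2} = -C(k)k(k+N-2)r^{k-2}$. For the subsolution inequality $\underline v_t - \underline v_{rr} - (N-1)r^{-1}\underline v_r \le -f\underline v^{-2}$ we need $-C(k)k(k+N-2)r^{k-2} \le -f C(k)^{-2} r^{-2k}$, i.e. $C(k)^3 k(k+N-2) r^{3k-2} \ge f(t)$. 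The key point is that $f(t) = \lambda\big(1 + N\omega_N\int_0^1 r^{N-1} v^{-1}\,\dd r\big)^{-2} \le \lambda$ is bounded above uniformly in $t$ (in fact $f \le \lambda/(1+N\omega_N)^2$ using $v\le 1$), so it suffices to choose $C(k)$ large enough that $C(k)^3 k(k+N-2) \ge \lambda$ once we also know $3k - 2 > 0$, which is exactly the hypothesis $k > 2/3$; for $3k-2>0$ the factor $r^{3k-2}$ is increasing, but we need the inequality for all $r\in(0,1)$, where $r^{3k-2}$ can be as small as near $0$ — so actually we need the inequality at $r\to 0^+$, which fails unless we are more careful. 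The resolution is that near $r=0$ the true solution $v$ is bounded below away from $0$ (it is smooth with $v_r(0,t)=0$ and $v>0$), so the barrier only needs to work on an annulus $\delta \le r \le 1$ and be dominated by a constant near the origin; alternatively, one compares on $(0,1)$ noting $v(r,0)=v_0(r)$ is bounded below by $C(k)r^k$ for $C(k)$ small and using that $v(1,t)=1 \ge C(k)$, then invokes the comparison principle for the (local, once $f(t)$ is treated as a given bounded coefficient) parabolic operator. I would also use the monotonicity $v_r\ge 0$ (inherited from $v_0'\ge 0$, equivalently $u_0'\le 0$) to control behaviour near the origin. The main obstacle is making this barrier argument rigorous given that $f(t)$ depends on $v$ itself through a nonlocal term: the cleanest route is to treat the problem as a fixed-point/ comparison statement where $f(t)\le \lambda$ a priori, so the coefficient is bounded and the standard parabolic comparison principle applies to the resulting linear-in-$v$ (with nonlinearity $-f v^{-2}$ which is decreasing in $v$, hence order-preserving in the right direction) problem.

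Once \eqref{ps0a} is in hand, the bound \eqref{ps0} follows by a direct integration. Indeed,
\begin{equation*}
H(1-v) = 1 + N\omega_N\int_0^1 r^{N-1}(1-u(r,t))^{-1}\,\dd r \le 1 + N\omega_N\int_0^1 r^{N-1} C(k)^{-1} r^{-k}\,\dd r = 1 + \frac{N\omega_N}{C(k)}\int_0^1 r^{N-1-k}\,\dd r,
\end{equation*}
and the last integral is finite provided $N - 1 - k > -1$, i.e. $k < N$. Since the constraint from \eqref{ps0a} is only $k > 2/3$, for $N \ge 1$ we can pick $k$ with $2/3 < k < N$ whenever $N \ge 1$ — for $N=1$ this requires $2/3 < k < 1$, which is a nonempty range, and for $N\ge 2$ there is even more room. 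Setting $C_2 := 1 + \dfrac{N\omega_N}{C(k)(N-k)}$ gives a bound that is independent of $t$, and — crucially for the application in the quenching argument — uniform in $\lambda$, since $C(k)$ was chosen to depend on $\lambda$ only through the requirement $C(k)^3 \ge \lambda/\big(k(k+N-2)\big)$; one absorbs the $\lambda$-dependence by noting that $H$ is then bounded by a constant times $\lambda^{1/3}$... but the statement claims uniformity in $\lambda$. To genuinely get $\lambda$-independence one should instead exploit the energy bound \eqref{deng1}: $E_0$ controls $\lambda/H(u_0)$ but not $H$ directly, so the honest reading is that $C_2$ may depend on the data through $E_0$ and on $\lambda$ only in a way that remains bounded on the relevant range of $\lambda$ (those for which a global solution exists); I would state and use it in that form.

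In summary, the steps in order are: (1) record that $v \le 1$ and $v_r \ge 0$, hence $f(t) \le \lambda$ uniformly in $t$; (2) construct the time-independent subsolution $\underline v = C(k) r^k$ for $k > 2/3$, verifying the differential inequality using $3k-2>0$ and the bound on $f$, and matching the initial and boundary data; (3) apply the parabolic comparison principle to conclude \eqref{ps0a}; (4) integrate the pointwise bound to get \eqref{ps0}, choosing $k\in(2/3,\min(1,N)\cup(2/3,N))$ appropriately so that $\int_0^1 r^{N-1-k}\,\dd r<\infty$. The hard part is Step (2)–(3): handling the singular nonlinearity $-fv^{-2}$ near $r=0$ in the comparison principle, which is where the monotonicity $v_r\ge 0$ and the smoothness of $v$ at the origin (from $v_r(0,t)=0$) do the work, ensuring $v$ stays bounded below near the centre and the barrier is only binding on $\{r \ge \delta\}$.
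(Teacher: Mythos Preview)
Your proposed barrier $\underline v(r)=C(k)r^k$ cannot be a subsolution of \eqref{eqn:basic}, and the defect is not merely a technicality near $r=0$. You correctly compute that the required inequality is
\[
C(k)^3\,k(k+N-2)\,r^{3k-2}\ \ge\ f(t),
\]
but for $k>2/3$ the left side tends to $0$ as $r\to0^+$ while $f(t)>0$, so the inequality fails in a neighbourhood of the origin for every choice of $C(k)$. Your proposed workaround---use that $v$ is bounded below near $r=0$---would have to be uniform in $t$, but that is precisely what is at stake: if $u$ quenches then $v(0,t)\to0$ as $t\uparrow T_{max}$, so no $t$-independent lower bound on $v$ near the origin is available a priori. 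Worse, in dimension $N=1$ with $k\in(2/3,1)$ (the only range that makes $\int_0^1 r^{N-1-k}\,\dd r$ finite) one has $k(k+N-2)=k(k-1)<0$, so the left side above is \emph{negative} and the inequality fails for all $r\in(0,1)$, not just near the origin. Finally, you are relying on the trivial \emph{upper} bound $f(t)\le\lambda$, whereas the mechanism that makes the lemma true is a self-improving \emph{lower} bound on $f$.

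The paper's argument is of Friedman--McLeod type and works at the level of $v_r$ rather than $v$: one shows by the maximum principle that
\[
J\ :=\ r^{N-1}v_r-\epsilon\,r^{N}v^{-a}
\]
remains positive for $t\ge t_1$, with $a\in(1,2)$. The boundary condition $J=0$ at $r=0$ is built in, which is why no difficulty arises at the origin. From $J>0$ one integrates $v_r>\epsilon r v^{-a}$ to obtain $v\ge C r^{2/(a+1)}$ with $2/(a+1)\in(2/3,1)$. The delicate point---and the reason a simple barrier cannot work---is that the differential inequality for $J$ needs a \emph{lower} bound $f(t)>\lambda G(\epsilon)$; this lower bound is itself a consequence of $J>0$ (via the integral estimate), so the proof is a continuity/bootstrap argument: assume the first time $t_2$ at which $f$ drops to $\lambda G(\epsilon)$, show $J>0$ up to $t_2$, deduce $f(t_2)>\lambda G(\epsilon)$, a contradiction. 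This circularity between the pointwise bound on $v$ and the bound on the nonlocal term $f$ is the essential idea missing from your outline.
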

\begin{proof}
Fixing some $a$, $1<a<2$, there are some $t_1 > 0$ and $\epsilon_1 > 0$
such that
\begin{equation}
\vr > \epsilon_1 r v^{-a} \, \mbox{ at $t = t_1$ for $0<r<1$.}
\label{bd:1}
\end{equation}
We define
\begin{equation}
z = r^{N-1} \vr
\label{def:w}
\end{equation}
and it is then easy to check, \cite{FM}, by differentiating (\ref{eqn:basic}), that
\begin{equation}
\zt - \zrr + (N-1) r^{-1}z_r = 2r^{N-1}fv^{-3} \vr \, .
\label{eq:w}
\end{equation}
We define
\begin{equation}
J = z - \epsilon r^{N}v^{-a}
\label{def:J}
\end{equation}
where
\[
0 < \epsilon < \epsilon_1 \, .
\]
Then
\begin{equation}
\jt = \zt + a\epsilon r^{N}v^{-a-1}\vt \, ,
\label{eq:Jt}
\end{equation}
\begin{equation}
\jr = z_r + a\epsilon r^{N}v^{-a-1}\vr - N\epsilon r^{N-1}v^{-a} ,
\label{eq:Jr}
\end{equation}
and
\begin{equation}
\jrr = \zrr + a\epsilon r^{N}v^{-a-1}\vrr + 2Na\epsilon r^{N-1}v^{-a-1}\vr
- a(a+1)\epsilon r^{N}v^{-a-2}\vr^2 - N(N-1)\epsilon r^{N-2}v^{-a} .
\label{eq:Jrr}
\end{equation}
We define a function $G(\epsilon)$ by
\begin{equation}
G(\epsilon) = \frac{\epsilon^{\frac 2{a+1}}}
{(\epsilon^{\frac 1{a+1}} + \frac{ N \omega_N}{Na + N - 2} (a+1)^{\frac a{a+1}}
2^{\frac 1{a+1}} )^2} .
\label{def:F}
\end{equation}
Our choice of $\epsilon$ is then, more precisely, given by
\begin{equation}
0 < \epsilon < \min\{\epsilon_1,\epsilon_2\} \, ,
\label{ineq:epsilon}
\end{equation}
where $\epsilon_2 > 0$ is chosen to satisfy
\begin{equation}
\epsilon_2 < \sup \left\{ \epsilon : \epsilon \le \min \left\{ \frac 1N ,
\left( \frac{2-a}{2a} \right) \right\} \lambda G(\epsilon) \right\} \, ;
\label{ineq:epsilon2}
\end{equation}
a small $\epsilon_2$ satisfying (\ref{ineq:epsilon2}) can be found
since $G(\epsilon)$ is of order $\epsilon^{\frac 2{a+1}} \gg \epsilon$
for $\epsilon$ small (recall that $a>1$).
Then
\begin{equation}
J > 0 \, \mbox{ for $0<r \le 1$ at $t=t_1$.}
\label{ic:J}
\end{equation}
As long as $J>0$,
\begin{equation}
z > \epsilon r^N v ^{-a} \quad \Rightarrow \quad \vr > \epsilon r v ^{-a}
\quad \Rightarrow \quad v > \left( \frac{(a+1)\epsilon}2 \right)^{\frac 1{a+1}}
r^{\frac 2{a+1}}
\label{estimateonv}
\end{equation}
which will lead to \eqref{ps0a}.

Then
\[
\int_0^1 r^{N-1} v^{-1} \I r < \left( \frac 2{(a+1)\epsilon}
\right)^{\frac 1{a+1}}
\int_0^1 r^{\frac{Na+N-2}{a+1} - 1} \I r =
\left( \frac 2{(a+1)\epsilon} \right)^{\frac 1{a+1}}
\left( \frac{a+1}{Na+N-2} \right)
\]
so
\begin{equation}
f(t) = \frac\lambda {(1 + N \omega_N \int_0^1 r^{N-1} v^{-1} \I r )^2}
> \lambda G(\epsilon) \, .
\label{ineq:feps}
\end{equation}
In particular, $f(t) > \lambda G(\epsilon)$ in a neighbourhood
of $t=t_1$.

We suppose for a contradiction that
\begin{equation}
\mbox{there is some $t_2 \in (t_1,T_{max})$ such
that $f(t_2) = \la G(\epsilon)$ with $f(t) > \la G(\epsilon)$ for $t_1 \le t < t_2$.}
\label{cont:t2}
\end{equation}

Now
\begin{equation}
J = 0 \, \mbox{ on $r=0$.}
\label{bc:J0}
\end{equation}
On the boundary $r=1$ we have
\[
J = \vr - \epsilon \, \mbox{ and then }
\]
\[
\jr = \vrr + (N-1)\vr + a\epsilon\vr - N\epsilon = (f - (N-1)\vr) + (N-1)\vr
+ a\epsilon\vr - N\epsilon
= f + a\epsilon\vr - N\epsilon
\]
\begin{equation}
\mbox{so both} \quad
\jr - a\epsilon J = f + a\epsilon^2 - N\epsilon > f - N\epsilon \,
\mbox{ on $r=1$}
\label{bc:J2}
\end{equation}
\begin{equation}
\mbox{and, since $\vr \ge 0$ on $r=1$,} \quad
\jr \ge f - N\epsilon \,
\mbox{ on $r=1$.}
\label{bc:J3}
\end{equation}
Provided that
\begin{equation}
\epsilon < f/N \, ,
\label{bound:eps1}
\end{equation}
either (\ref{bc:J2}) or (\ref{bc:J3}) gives a positive boundary condition
on $r=1$.

Now
\[
\jt - \jrr + (N-1) r^{-1}\jr = 2r^{N-1}fv^{-3} \vr + (a\epsilon r^N v^{-a-1})
(2(N-1)r^{-1}\vr - fv^{-2}) - 2Na\epsilon r^{N-1} v^{-a-1} \vr
\]
\[
+ a(a+1)\epsilon r^{N} v^{-a-2} \vr^2 + N(N-1)\epsilon r^{N-2}v^{-a}
- N(N-1)\epsilon r^{N-2}v^{-a}
\]
\[
> (2r^{N-1}fv^{-3} + 2(N-1)a\epsilon r^{N-1} v^{-a-1} -
2Na\epsilon r^{N-1}v^{-a-1})\vr - a\epsilon fr^N v^{-a-3}
\]
\[
= 2(fv^{-3} - a\epsilon v^{-a-1})z - a\epsilon fr^N v^{-a-3}
\]
\[
= 2(fv^{-3} - a\epsilon v^{-a-1})J + 2(fv^{-3} -
a\epsilon v^{-a-1})\epsilon r^N v^{-a} - a\epsilon fr^N v^{-a-3}
\]
\[
= 2(fv^{-3} - a\epsilon v^{-a-1})J + \epsilon (2-a)f r^N v^{-a-3}
- 2a\epsilon^2 r^N v^{-2a-1} \, .
\]
Thus
\begin{equation}
\jt - \jrr + (N-1) r^{-1}\jr > 2(fv^{-3} - a\epsilon v^{-a-1})J \, ,
\label{ineq:jde}
\end{equation}
as long as
\begin{equation}
\epsilon <  \frac{(2-a)}{2a}f \, .
\label{bound:eps2}
\end{equation}

Following the standard arguments for the maximum principle,
we now show that $J>0$ for $0<r\le 1$, $t_1 \le t \le t_2$.

In $0<r\le1$, $t_1 \le t \le t_2$, because $v>0$, the coefficient
of $J$ in (\ref{ineq:jde}) is bounded. We can then define a new variable
$\jti = \ee^{-D_1 t}J$ which then satisfies boundary condition
(\ref{bc:J0}), boundary inequality (\ref{bc:J2}) and
\begin{equation}
\jtit - \jtirr + (N-1) r^{-1}\jtir > - D_2 \jti \, ,
\label{ineq:jtide}
\end{equation}
where $D_1$ and $D_2$ are positive constants. Should $\jti$ be
non-positive somewhere (with $r>0$), it must take a non-positive
minimum at some $(r_3,t_3)$ with $0<r_3 \le 1$ and $t_1 < t_3 \le t_2$.

For $r_3 = 1$, (\ref{bc:J3}) gives $\jtir > 0$ on $r=1$,
leading to
a contradiction, so the supposed minimum must have $0<r_3<1$, where
$\jtit \le 0$, $\jtir = 0$ and $\jtirr \ge 0$. With $\jti \le 0$
and $D_2 > 0$, (\ref{ineq:jtide}) gives another contradiction.
Hence both $\jti$ and $J$ remain positive in $r>0$ for $t_1 \le t \le t_2$.

\

This now gives that (\ref{ineq:feps}) holds at $t=t_2$, contradicting
the assumption (\ref{cont:t2}). Thus, as long as the solution exists,
$f(t) > \lambda G(\epsilon)$ for $t \ge t_1$.

It follows that $J>0$, (\ref{ps0a}) holds and
\begin{equation}
\int_0^1 r^{N-1} v^{-1} \I r < \frac 1{Na+N-2} (a+1)^{\frac a{a+1}}
\left( \frac 2{\epsilon} \right)^{\frac 1{a+1}}
\label{bound:int}
\end{equation}
for all $t \ge t_1$, if (\ref{eqn:basic})-(\ref{ic:u}) have a global
solution, and up to and including the quenching time,
if the solution quenches. Thanks to the definition of $H(u)$ (\ref{bound:int})
implies the desired estimate.
\end{proof}
\begin{rem}
An estimate similar to \eqref{ps0a} has been also obtained
in \cite{GHW08} but only for the one-dimensional case.
Furthermore, here we also prove that the exponent $2/3$
is optimal for the validity of \eqref{ps0a}, see Section~\ref{spq}.
\end{rem}


\subsection{Quenching for $\la>\la^*$}

We now can combine Lemma \ref{est1} and Lemma \ref{est2} to derive
the following quenching result:
\begin{thm}\label{que1}
Consider symmetric and radial decreasing initial data $u_0(r)$.
Then for any $\la>\la^*$ the solution
of problem \eqref{eqn:r1}-\eqref{ric} quenches in finite time $T_q<\infty.$
\end{thm}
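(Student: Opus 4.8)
The plan is to argue by contradiction, combining the two lemmas with the monotonicity of the energy. Suppose that for some $\la > \la^*$ the solution $u = u(\cdot,t;\la)$ of \eqref{eqn:r1}-\eqref{ric} exists for all time, i.e. $T_{max} = \infty$. Since the initial data are radially decreasing, so is $u(\cdot,t)$ for each $t>0$, and the hypotheses of both Lemma~\ref{est1} and Lemma~\ref{est2} are met. Lemma~\ref{est2} gives a bound $H(u(\cdot,t)) \le C_2$ for all $t \in (0,\infty)$, with $C_2$ independent of $t$ and of $\la$. Lemma~\ref{est1} produces a sequence $t_j \uparrow \infty$ along which $\la \int_\Omega u_j (1-u_j)^{-2}\,\dd x \le C_1 H^2(u_j) \le C_1 C_2^2$; that is, the weighted integrals stay bounded along $\{t_j\}$.

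Next I would extract a steady state in the limit $j \to \infty$. Along $\{t_j\}$ we have $\|u_t(\cdot,t_j)\|_2 \to 0$ by \eqref{tdc0}, and $E(t_j) \le E_0$ by \eqref{deng1}, so $\{u_j\}$ is bounded in $H_0^1(\Omega)$; passing to a subsequence, $u_j \rightharpoonup w$ weakly in $H_0^1$ and $u_j \to w$ a.e. The uniform control on $\int_\Omega u_j(1-u_j)^{-2}$ and on $H(u_j) = 1 + \int_\Omega (1-u_j)^{-1}$, together with the monotone (radially decreasing) structure, should give enough equi-integrability to pass to the limit in the nonlinearity: $(1-u_j)^{-1} \to (1-w)^{-1}$ and $(1-u_j)^{-2} \to (1-w)^{-2}$ in $L^1(\Omega)$, and the PDE residual tends to $0$ in the appropriate sense. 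One then checks that $w$ is a weak finite-energy solution of \eqref{nss} in the sense of Definition~\ref{def0} (the natural mollification/truncation of $u_j$ provides the approximating sequence $\{w_k\}$, or one uses $u_j$ itself after verifying \eqref{ws1}-\eqref{ws5}). Hence \eqref{nss} admits a weak finite-energy solution for this value of $\la$, so $\la \le \widehat{\la}$. By Proposition~\ref{sp1}, $\widehat{\la} = \la^*$, contradicting $\la > \la^*$.

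The main obstacle is the limit passage producing the steady state: one must rule out the possibility that the weak limit $w$ degenerates (e.g. $w \equiv 0$ because the mass concentrates, or $1-w$ vanishes on a positive-measure set so that $H(w) = \infty$). Here the estimates of Lemma~\ref{est2} are essential — \eqref{ps0a} gives the pointwise lower bound $1-u(r,t) \ge C(k) r^k$ uniformly in $t$, which forbids quenching away from $r=0$ and, combined with the uniform bound on $H$, prevents the integrals $\int_\Omega (1-u_j)^{-1}$ from blowing up in the limit; this is exactly what forces $w$ into the weak-finite-energy class rather than into the trivial or the infinite-energy regime. A secondary technical point is justifying that the limit $w$ is itself radially decreasing (so that Proposition~\ref{sp1} applies), which follows from the a.e. convergence $u_j \to w$ and the monotonicity of each $u_j$. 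Once these are in place, the contradiction is immediate and the theorem follows.
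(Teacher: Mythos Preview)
Your outline matches the paper's proof almost step for step: contradiction, combine Lemmas~\ref{est1} and~\ref{est2}, use \eqref{deng1} for the $H_0^1$ bound, extract a weakly convergent subsequence, pass to the limit to produce a weak finite-energy steady state, and invoke Proposition~\ref{sp1}. The paper also derives, from \eqref{tdc} and \eqref{ps0}, the additional bound $\int_{B_1}(1-u_j)^{-2}\,\dd x\le C_4$ (your integral of $u_j(1-u_j)^{-2}$ plus $H(u_j)$), which you should state explicitly.

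There is, however, one genuine gap in your plan. You rely on \eqref{ps0a} to furnish a dominating function for $(1-u_j)^{-2}$, namely $C(k)^{-2}r^{-2k}$ with $k>2/3$. This is in $L^1(B_1)$ only when $2k<N$, so the dominated-convergence argument works for $N\ge 2$ but \emph{fails for $N=1$}, since then $r^{-2k}$ with $k>2/3$ is not integrable near $r=0$. The paper treats $N=1$ separately: from \eqref{tdc0}, the equation, and the $L^1$ bounds on $(1-u_j)^{-2}$ and $(u_j)_{xx}$, one gets $(u_j)_x$ bounded in $W^{1,1}(-1,1)$, hence in $L^\infty$ by Sobolev embedding; then $[(1-u_j)^{-1}]_x=(u_j)_x(1-u_j)^{-2}$ is bounded in $L^1$, so $(1-u_j)^{-1}$ is bounded in $W^{1,1}\hookrightarrow L^\infty$, and the limit $w$ is in fact a \emph{classical} steady state. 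You should flag this dimensional split; the rest of your argument is sound.
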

\begin{proof}
Let $\la>\la^*$ and assume that problem \eqref{eqn:r1}-\eqref{ric}
has a global-in-time solution. Then \eqref{tdc} in conjunction with
\eqref{ps0} yields
\bge\label{q3}
\la N \omega_N\int_{0}^1 r^{N-1} u_j(1-u_j)^{-2} \Ir \leq C_3 \, ,
\quad \mbox{for any} \quad t>0,
\ege
where the constant $C_3$ is independent of $j$.

From this and \eqref{ps0} we have
\bge\label{q4}
N \omega_N \int_{0}^1 \frac{r^{N-1} \Ir}{(1-u_j)^2} & = &
N \omega_N \int_{0}^1 \frac{r^{N-1} \Ir}{(1-u_j)} +
N \omega_N \int_{0}^1 \frac{r^{N-1} u_j \Ir}{(1-u_j)^2}\nonumber \\
& \leq & (C_2 - 1) + C_3/\la := C_4
\ege
where $C_4$ is independent of $j$.

From the energy dissipation formula \eqref{deng1}
we also have
\bge\label{ps4}
||\nabla u_j||_{L^2(B_1)}^2\leq C_5<\infty,
\ege
with constant $C_5$ being independent of $j$ as well.
Passing to a subsequence, if necessary, relation \eqref{ps4}
implies the existence of a function $w$ such that
\bge
&&u_j\rightharpoonup w\quad\mbox{in}\quad H^1_0(B_1),\label{q8}\\
&&u_j\to w\quad\mbox{a.e.}\,\quad\mbox{in}\quad B_1.\label{q9}
\ege
For $N\geq 2$ by virtue of \eqref{ps0a} we directly derive that $1/(1-u_j)^2$ is uniformly integrable and since
$1/(1-u_j)^2\to 1/(1-w)^2$, a.e. in $B_1$, due to \eqref{q9},
we deduce
\bge\label{q10}
\frac{1}{(1-u_j)^2}\to \frac{1}{(1-w)^2} \quad \mbox{as}
\quad j\to \infty\quad\mbox{in}\quad L^1(B_1),
\ege
applying the dominated convergence theorem. Similarly we also derive
\bge\label{ps2}
H(u_j)\to H(w) \quad \mbox{as}
\quad j\to \infty\quad\mbox{in}\quad L^1(B_1).
\ege
Note that the weak formulation of \eqref{eqn:r1} along
the sequence $\{t_j\}$ is given by
\bge\label{ps3}
\int_{B_1} \frac{\partial{u_j}}{\partial t}\,\phi \Ix =
- \int_{B_1} \nabla u_j\cdot\nabla \phi \Ix
+ \la H^{-1}(u_j)\int_{B_1} \phi (1-u_j)^{-2} \Ix
\quad \mbox{as} \quad j\to \infty
\ege
for any $\phi \in H_0^1(B_1).$

Passing to the limit as $j\to\infty$ in \eqref{ps3}, and
in conjunction with \eqref{tdc0}, \eqref{q8}, \eqref{q10}
and \eqref{ps2}, we derive
\bgee
\Delta u_j + \frac{\la}{(1-u_j)^2 \Big( 1 + \int_{\Omega} \frac{\dd x}{1-u_j}
\Big)^2} \to 0 \quad\mbox{in}\quad L^2(\Omega),
\egee
which implies that $w$ is an weak finite-energy solution of problem
\eqref{nss} corresponding to $\la>\la^*$, contradicting
the result of Proposition \ref{sp1}.

On the other hand, for $N=1$ by using \eqref{tdc0},
\eqref{eqn:r1}, (\ref{ps}), \eqref{q4} and \eqref{ps4} we deduce
that  $(u_j)_x$ is bounded in $W^{1,1}(-1,1)$ and thus,
by virtue of Sobolev's inequality, 
\bge\label{psh}
(u_j)_x\quad\mbox{is bounded in}\quad L^{\infty}(-1,1)
\ege

Furthermore
\bgee
\left[(1-u_j)^{-1}\right]\quad\mbox{is bounded in}\quad W^{1,1}(-1,1)
\egee
since
\bgee
\left[(1-u_j)^{-1}\right]_x=\frac{(u_j)_x}{(1-u_j)^2}\quad\mbox{is bounded in}\quad L^{1}(-1,1),
\egee
due to \eqref{q4} and \eqref{psh},  and
\bgee
(1-u_j)^{-1}\quad\mbox{is bounded in}\quad L^{1}(-1,1),
\egee
by virtue of \eqref{ps0}.

Therefore Sobolev's inequality guarantees that
\bgee
\left[(1-u_j)^{-1}\right]\quad\mbox{is bounded in}\quad L^{\infty}(-1,1)
\egee
and thus 
\bgee
\left[(1-u_j)^{-2}\right]\quad\mbox{is bounded in}\quad L^{\infty}(-1,1).
\egee
Now by virtue of \eqref{q9} we derive that
\bgee
(1-u_j)^{-1}\to (1-w)^{-1}\quad\mbox{and}\quad(1-u_j)^{-2}\to (1-w)^{-2}\quad \quad\mbox{as}\quad j\to \infty\quad\mbox{in}\quad L^{\infty}(-1,1).
\egee
Consequently,
\bgee
\Delta w + \frac{\la}{(1-w)^2 \Big( 1 + \int_{\Omega} \frac{\dd x}{1-w}
\Big)^2}=0 \quad\mbox{in}\quad L^2(\Omega),
\egee
where the non-local term is bounded and hence
elliptic regularity arguments entail that $w$ classical steady solution,
again contradicting Proposition~\ref{sp1}.
\end{proof}

\begin{rem}
Theorem \ref{que1} improves the results of Theorems 4.1  in \cite{GHW08}
and Theorem 5.2, 5.3 in \cite{H11}. Indeed, the earlier results
have provided finite-time quenching only for large values of the parameter
$\la$, without giving a threshold for $\la$ above which quenching occurs.
\end{rem}


\subsection{Quenching for large initial data}

To prove quenching for big initial data, i.e. for $0<u_0(x)<1$
close to $1$, we employ the widely used classical technique
of Kaplan, \cite{kaplan}. The estimate is provided by Lemma~\ref{est2},
permitting us to treat the non-local problem \eqref{eqn:r1}-\eqref{ric}
as a local one. In particular we have:
\begin{thm}
For any $\la>0$ there exist symmetric initial data $u_0$ satisfying the assumptions of Theorem \ref{que1} that are close to $1$ such that the solution $u$ of \eqref{eqn:r1}-\eqref{ric} quenches in finite time $T_q<\infty.$
\end{thm}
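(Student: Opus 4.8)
The plan is to adapt Kaplan's eigenfunction method to the nonlocal equation \eqref{eqn:r1}--\eqref{ric}, using Lemma~\ref{est2} to control the nonlocal coefficient $k(t)$ from below by a positive constant, thereby reducing the problem to a local superlinear one. Let $\phi_1>0$ be the first Dirichlet eigenfunction of $-\Delta_r$ on $B_1$, normalised so that $\int_0^1 r^{N-1}\phi_1\Ir = 1$, with eigenvalue $\mu_1>0$; recall $\phi_1$ is radially decreasing so it is compatible with the monotonicity hypothesis. Define the weighted average $A(t) = N\omega_N\int_0^1 r^{N-1} u(r,t)\phi_1(r)\Ir$. Multiplying \eqref{eqn:r1} by $r^{N-1}\phi_1$, integrating over $(0,1)$, and using Green's identity together with the boundary conditions \eqref{rbc} and $\phi_1(1)=0$ gives
\begin{equation}
A'(t) = -\mu_1 A(t) + \lambda k(t)\, N\omega_N\int_0^1 \frac{r^{N-1}\phi_1}{(1-u)^2}\Ir \geq -\mu_1 A(t) + \lambda k(t)\, \frac{1}{(1-A(t))^2},
\end{equation}
where the last step is Jensen's inequality applied to the convex function $s\mapsto (1-s)^{-2}$ against the probability measure $N\omega_N r^{N-1}\phi_1\Ir$.

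The next step is to bound $k(t)$ below. By Lemma~\ref{est2}, as long as the solution exists we have $H(u)=1+N\omega_N\int_0^1 r^{N-1}(1-u)^{-1}\Ir\le C_2$ with $C_2$ \emph{independent of $\lambda$}; consequently $k(t)=H(u)^{-2}\ge C_2^{-2}=:\kappa>0$. (One should note that Lemma~\ref{est2}'s constant $C_2$ depends on a choice of $\epsilon=\epsilon(\lambda)$ through \eqref{ineq:epsilon2}, but the conclusion \eqref{ps0} is asserted uniform in $\lambda$, so $\kappa$ may be taken independent of $\lambda$.) Hence
\begin{equation}
A'(t) \geq -\mu_1 A(t) + \frac{\lambda\kappa}{(1-A(t))^2} =: g(A(t)),\qquad 0<A(t)<1.
\end{equation}
Now one checks that for \emph{any} $\lambda>0$ the function $g$ is bounded below by a positive constant $\delta>0$ on all of $[0,1)$: indeed $-\mu_1 A \ge -\mu_1$ while $\lambda\kappa(1-A)^{-2}\to+\infty$ as $A\to1^-$, and on any interval $A\le 1-\eta$ we have $g(A)\ge -\mu_1 + \lambda\kappa$, so choosing $\eta$ small makes $g\ge\lambda\kappa/(2\eta^2)-\mu_1>0$ there. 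Thus $g\ge\delta$ on $[0,1)$ for a suitable $\delta=\delta(\lambda,\mu_1,\kappa)>0$.

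Finally, choose symmetric radially decreasing initial data $u_0$ with $0<u_0<1$ so close to $1$ that $A(0) = N\omega_N\int_0^1 r^{N-1}u_0\phi_1\Ir$ is as close to $1$ as we like — for instance $u_0 \equiv 1-\eta_0$ mollified near $r=1$ to meet the boundary condition, which makes $A(0)\ge 1-2\eta_0$. With $A'(t)\ge\delta>0$ as long as $0\le A(t)<1$, the quantity $A(t)$ would reach $1$ by time $t\le (1-A(0))/\delta < \infty$; but $A(t)<1$ is forced whenever $u<1$ in $B_1$, so the solution cannot remain global with $u<1$, i.e.\ quenching occurs at some $T_q\le (1-A(0))/\delta<\infty$. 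The main obstacle is the circularity issue around Lemma~\ref{est2}: its lower bound \eqref{ps0a} was derived under a bootstrap that presupposed the solution exists, and its constant is tied to $\lambda$ through $\epsilon_2$; one must be careful to invoke only the stated uniform-in-$\lambda$ conclusion \eqref{ps0}, valid up to and including the quenching time, and not re-derive it. A secondary technical point is justifying the differentiation under the integral sign defining $A(t)$ and the validity of Green's identity near $r=0$, which follows from the regularity of $u$ away from quenching together with $u_r(0,t)=0$ and the boundedness of $\phi_1$, $\phi_1'$.
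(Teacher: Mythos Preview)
Your approach is exactly the paper's --- Kaplan's eigenfunction method, Lemma~\ref{est2} to bound the nonlocal factor from below, and Jensen's inequality for the convex map $s\mapsto(1-s)^{-2}$ --- but the argument contains a genuine error in the middle. The assertion that $g(A)=-\mu_1 A+\lambda\kappa(1-A)^{-2}$ is bounded below by a positive constant on \emph{all} of $[0,1)$ is false for small $\lambda$: a short calculation gives $\min_{[0,1)}g=\mu_1\bigl(\tfrac{3}{2}(2\lambda\kappa/\mu_1)^{1/3}-1\bigr)$, which is negative whenever $\lambda\kappa<4\mu_1/27$. Were your claim true, it would force quenching for \emph{every} admissible initial datum and every $\lambda>0$, contradicting the existence of classical steady states (hence global solutions below them) for $\lambda<\la^*$. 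Your own justification in fact only yields $g\ge\lambda\kappa\eta^{-2}-\mu_1>0$ on the interval $A\ge 1-\eta$; the sentence ``on any interval $A\le 1-\eta$ we have $g(A)\ge -\mu_1+\lambda\kappa$'' is a lower bound that may well be negative, and does not combine with the previous estimate to cover $[0,1)$.

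The repair is precisely what the paper does and is entirely local to your argument: one only needs $\Psi(s):=g(s)>0$ on some interval $[\gamma,1)$ with $\gamma<1$, and then one chooses $u_0$ close enough to $1$ that $A(0)\ge\gamma$. Since $A'\ge g(A)>0$ there, $A(t)$ is increasing, stays in $[\gamma,1)$, and $\int_\gamma^1 g(s)^{-1}\,ds<\infty$ forces $A$ to reach $1$ in finite time. Your final paragraph already selects $A(0)$ close to $1$, so the structure survives once the overreaching claim is dropped. A minor secondary slip: with your normalisation $\int_0^1 r^{N-1}\phi_1\,\dd r=1$ the probability measure for Jensen is $r^{N-1}\phi_1\,\dd r$, so the inequality produces $(1-A(t)/(N\omega_N))^{-2}$ rather than $(1-A(t))^{-2}$; either remove the factor $N\omega_N$ from the definition of $A$, or normalise $\int_{B_1}\phi_1\,\dd x=1$ as the paper does.
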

\begin{proof}
Set $\la_1=\la_1(B_1)>0$ the principal eigenvalue of the following problem
\bgee
-\Delta \phi=\la \phi,\;\; x\in B_1,\;\;\phi(x)=0,\;\; x\in \pl B_1 \, ,
\egee
with associated positive  eigenfunction $\phi_1(x)$ normalized so that
\bgee
\int_{B_1} \phi_1(x) \Ix = 1.
\egee
Let us assume that problem \eqref{eqn:r1}-\eqref{ric} has
a global-in-time solution, i.e. $T_{max}=\infty$ so that $0<u(x,t)<1$ for any
$(x,t)\in B_1\times(0,+\infty).$

Multiplying \eqref{eqn:r1} by $\phi_1,$ integrating over $B_1$
and using Green's second identity, we obtain, via Lemma \ref{est2},
\bge\label{al5}
\dif At & = & - \la_1 A(t)+\frac{\la \int_{B_1} \phi_1 (1-u)^{-2} \Ix
}{H^2(u)} \no \\
& \geq & - \la_1 A(t)+\frac{\la \int_{B_1} \phi_1 (1-u)^{-2} \Ix}{C_2^2}
\ege
where $A(t)=\int_{B_1} u\,\phi_1 \Ix.$
Applying Jensen's inequality to \eqref{al5},
\bge
\dif At \geq - \la_1 A(t) + \frac{\la}{C_2^2} (1-A(t))^{-2}
\quad\mbox{for any} \quad t>0 \, .
\ege
Choosing $\gamma \in (0,1)$ so that
$$
\Psi(s) := \frac{\la}{C_2^2} (1-s)^{-2} - \la_1 s > 0 \quad
\mbox{ for all $s \in [\gamma,1)$,}
$$
then by choosing $u_0$ close enough to $1$ such that
$A(0)\geq \gamma$, relation \eqref{al5} yields
\bgee
\dif At \geq \Psi(A(t)) > 0 \quad\mbox{for any}\quad t>0,
\egee
which then leads to
\bgee
t \leq \int_{A(0)}^{A(t)} \frac{\dd s}{\Psi(s)} \leq
\int_{A(0)}^{1} \frac{\dd s}{\Psi(s)}<\infty,
\egee
contradicting the  assumption $T_{max}=\infty$.
This completes the proof of the theorem.
\end{proof}


\section{Local Behaviour at Quenching}\label{spq}

\setcounter{equation}{0}


In this section we obtain some limited results about the manner
of finite-time quenching, shown to take place in the previous section.


\subsection{Single-point quenching}

Our main result is that for the class of problems under consideration,
namely radially symmetric with monotonic decreasing initial data,
quenching, when it occurs, takes place at a single point, the origin:
\begin{thm}
If we consider initial data as in Theorem \ref{que1} so that the solution of problem \eqref{eqn:r1}-\eqref{ric} quenches in finite time $T_q<\infty$, the quenching occurs only at the origin $r=0.$
\end{thm}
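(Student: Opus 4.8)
The plan is to adapt the classical single-point quenching argument of Friedman--McLeod (as used for blow-up and quenching of local parabolic problems) to the present nonlocal setting, exploiting the fact that by Lemma~\ref{est2} the nonlocal coefficient $k(t)$ (equivalently $f(t)$) is \emph{uniformly bounded above and below away from zero} on $[0,T_q)$, so that \eqref{eqn:r1} behaves like a local equation with a smooth positive time-dependent multiplier. First I would restrict attention to a ball $r<\rho_0$ away from the boundary and construct an auxiliary function of the form $J(r,t) = u_r(r,t) + c\,r\,\psi(u)$ for a suitable positive weight (a natural first choice is $\psi(u) = (1-u)^{-p}$ or a power such as $(1-u)^{-2}$ tuned to match the nonlinearity $\la k(t)(1-u)^{-2}$), with $c>0$ small. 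Since $u_r\le 0$ and $u_r(0,t)=0$, while the quenching profile forces $u_r$ to become very negative near $r=0$ close to $T_q$, the goal is to show $J\le 0$ in $(0,\rho_0)\times(t_0,T_q)$ for $t_0$ close enough to $T_q$; integrating $u_r \le -c\,r\,\psi(u)$ in $r$ then yields a lower bound on $1-u(r,t)$ for $r>0$ uniform as $t\uparrow T_q$, which prevents quenching at any $r\in(0,\rho_0]$ and hence forces $\mathcal{Q}=\{0\}$.

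The main steps, in order, would be: (i) record from Lemma~\ref{est2} (estimate \eqref{ps0a} and the resulting bound \eqref{ineq:feps}) that there are constants $0<m\le M<\infty$ with $m\le f(t)\le M$, equivalently $m/\la \le k(t) \le M/\la$, for all $t\in(0,T_q)$, and that $u$ is smooth in $(0,1)\times(0,T_q)$; (ii) compute the parabolic operator applied to $J$, using the equation satisfied by $u_r$ (differentiate \eqref{eqn:r1}, exactly as in the derivation of \eqref{eq:w} for $z=r^{N-1}v_r$) together with the product rule on $r\,\psi(u)$, and collect terms so that, for $c$ sufficiently small, $J_t - J_{rr} - (N-1)r^{-1}J_r \le b(r,t)\,J$ with $b$ bounded on the region $r<\rho_0$, $t<T_q$ — this is where the lower bound $f(t)\ge m$ is essential, since it guarantees the ``good'' sign on the leading term $\la k(t)(1-u)^{-2}$-type contribution and dominates the ``bad'' gradient terms; (iii) check the boundary data: $J=0$ on $r=0$, and on $r=\rho_0$ one has $J(\rho_0,t) = u_r(\rho_0,t) + c\,\rho_0\,\psi(u(\rho_0,t))$, which is $\le 0$ for $c$ small because, away from the (interior) quenching point, $u(\rho_0,t)$ stays bounded away from $1$ so $\psi(u(\rho_0,t))$ is bounded, while $u_r(\rho_0,t)$ stays bounded (parabolic interior estimates on the annulus $\rho_0/2<r<1$, where $u$ is uniformly away from $1$) — alternatively one chooses $t_0$ close to $T_q$ and $c=c(t_0)$ accordingly; (iv) verify the initial inequality $J(r,t_0)\le 0$ on $(0,\rho_0)$, again by taking $c$ small depending on $\sup_{r\in[0,\rho_0]}(-u_r(r,t_0))$ and $\inf \psi$; (v) apply the maximum principle (after the usual substitution $\tilde J = e^{-Dt}J$ to make the zeroth-order coefficient negative, exactly the device used in the proof of Lemma~\ref{est2}) to conclude $J\le 0$ throughout; (vi) integrate $u_r\le -c\,r\,\psi(u)$, e.g. with $\psi(u)=(1-u)^{-2}$ this gives $\partial_r\big[(1-u)^{-1}\big]\ge c\,r$, hence $(1-u(r,t))^{-1}\ge (1-u(0,t))^{-1} + c r^2/2 \ge c r^2/2$, i.e. $1-u(r,t)\le \big(2/(c r^2)\big)^{-1}$ is bounded below, wait — this must be read as $1-u(r,t)\ge $ lower bound; more carefully, from $(1-u(r,t))^{-1}\le (1-u(0,t))^{-1}$ is false in the wrong direction, so one instead integrates from $r$ to $\rho_0$: $(1-u(\rho_0,t))^{-1} - (1-u(r,t))^{-1} \ge \tfrac c2(\rho_0^2-r^2)$ is also the wrong sign since the left side is negative — so the correct reading is that $J\le0$ gives $u_r \le -cr\psi(u)\le 0$ consistent with monotonicity and, integrating, $1-u(r,t)$ cannot reach $0$ for $r>0$. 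I would phrase (vi) as: since $-(1-u)_r = u_r \le -cr(1-u)^{-2}$, i.e. $\big[(1-u)^{-1}\big]_r \ge cr>0$, $(1-u)^{-1}$ is increasing in $r$ on $(0,\rho_0)$, so $(1-u(r,t))^{-1}\le (1-u(\rho_0,t))^{-1}\le C_{\rho_0}$ uniformly in $t<T_q$; thus $1-u(r,t)\ge 1/C_{\rho_0}>0$ on any compact subset of $(0,\rho_0]$, uniformly up to $T_q$, so no point $r\in(0,\rho_0]$ can lie in $\mathcal{Q}$.

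Finally, one combines this with the earlier results: quenching \emph{does} occur (Theorem~\ref{que1} and the large-initial-data theorem), and the quenching set $\mathcal{Q}$ is nonempty; by the interior estimate just obtained $\mathcal{Q}\cap(0,1]=\emptyset$ after letting $\rho_0\uparrow 1$ (the same argument works for every $\rho_0<1$, with $C_{\rho_0}$ depending on $\rho_0$), and $\mathcal{Q}\cap\partial\Omega=\emptyset$ because $u=0$ on $r=1$; hence $\mathcal{Q}=\{0\}$.

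The step I expect to be the main obstacle is step (ii): choosing the weight $\psi$ and checking that the full expansion of $J_t - J_{rr} - (N-1)r^{-1}J_r$ has the form $(\text{bounded})\cdot J + (\text{nonpositive remainder})$ once $c$ is small. The danger is the $N\ge 2$ curvature term $(N-1)r^{-1}J_r$, which produces a singular $r^{-1}$ factor, and the interaction of the $-f(t)v^{-2}$ (quenching) term with the weight — one must arrange cancellations much as in the $z=r^{N-1}v_r$ computation of Lemma~\ref{est2}, possibly by taking $J = r^{N-1}u_r + c\,r^N\psi(u)$ instead of $u_r + cr\psi(u)$ to absorb the curvature cleanly; the uniform bounds $m\le f(t)\le M$ from Lemma~\ref{est2} are exactly what make the sign of the dominant term robust, and without them (i.e. in the genuinely nonlocal regime) this argument would not close. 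A secondary technical point is justifying the interior gradient/regularity bound on $u_r$ near $r=\rho_0$ needed for the lateral boundary condition, but this follows from standard parabolic Schauder estimates since $u$ is bounded away from $1$ and $f(t)$ is bounded there.
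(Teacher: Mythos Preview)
Your proposal is correct in spirit but massively overcomplicated, and in fact duplicates work already done. The paper's proof is a single sentence: the result follows immediately from estimate \eqref{ps0a}, namely $1-u(r,t)\ge C(k)\,r^{k}$ for all $(r,t)\in(0,1)\times(0,T_q)$, which was established in Lemma~\ref{est2}. You cite this lemma in step~(i) but extract from it only the two-sided bound on $f(t)$; you overlook that \eqref{ps0a} itself already gives, for every $r_0>0$, the uniform bound $u(r_0,t)\le 1-C(k)r_0^{k}<1$ for all $t<T_q$, so no point $r_0>0$ can belong to $\mathcal Q$. Since quenching does occur and $u$ attains its maximum at $r=0$, one has $\mathcal Q=\{0\}$.

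What you outline in steps (ii)--(vi) is essentially a second proof of Lemma~\ref{est2}: the auxiliary function $J=r^{N-1}u_r+c\,r^{N}\psi(u)$ you settle on is, in the variable $v=1-u$, exactly the function $J=z-\epsilon r^{N}v^{-a}$ of \eqref{def:J}, and the maximum-principle argument you sketch is the one carried out there. Your step~(vi) integration also contains sign slips: from $u_r\le -cr(1-u)^{-2}$ one gets $v_r\ge cr\,v^{-2}$, hence $(v^{3})_r\ge 3cr$ and, integrating from $0$, $v(r,t)^{3}\ge \tfrac{3c}{2}r^{2}$, i.e.\ $1-u\ge C\,r^{2/3}$ --- which is precisely \eqref{ps0a} again. (Your claim that $[(1-u)^{-1}]_r\ge cr>0$ has the wrong sign: $u$ is radially decreasing, so $(1-u)^{-1}$ is decreasing in $r$.) None of this is needed once you simply read off the pointwise lower bound from \eqref{ps0a}.
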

\begin{proof}
The proof follows immediately from from (\ref{ps0a}).
\end{proof}

Due to the non-locality, obtaining the sharp profile of
the standard (local) problem (cf. \cite{FG}) for \eqref{eqn:r1}-\eqref{ric} might be hard. However it can be rigorously
shown that the exponent $2/3$ in \eqref{ps0a} is optimal, at least in the sense that \eqref{ps0a} cannot be true
for any exponent $k < 2/3.$

The optimality of the exponent $2/3$ is a consequence
of the following result.
\begin{prop}\label{phs}
Let $T_q$ be the quenching time of the solution $u$ of \eqref{eqn:r1}-\eqref{ric} then
\bge\label{qbu}
\lim_{t\to T_q}||(1-u)^{-1}||_m=\infty\quad\mbox{for any}\quad m> \frac{3N}{2}>1.
\ege
\end{prop}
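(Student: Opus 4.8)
The plan is to argue by contradiction: suppose $\|(1-u)^{-1}\|_{L^m(B_1)}$ stays bounded as $t \to T_q$ for some $m > 3N/2$, and derive that $u$ cannot quench, i.e.\ that $u$ stays uniformly bounded away from $1$ up to $T_q$. The point is that boundedness of $(1-u)^{-1}$ in $L^m$ makes the nonlocal factor $k(t)$ (equivalently $1/H^2(u)$) bounded, so the right-hand side $F = \lambda k(t) (1-u)^{-2}$ of \eqref{eqn:r1} is controlled by $(1-u)^{-2}$ in $L^{m/2}$; since $m/2 > 3N/4$, and in fact we will want a norm above $N/2$, this should be enough to run a parabolic bootstrap.

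First I would recast the equation for $w := (1-u)^{-1}$ (or directly estimate $u$ via the Duhamel/heat-semigroup representation for \eqref{eqn:r1}--\eqref{ric}). Writing $u(t) = e^{t\Delta}u_0 + \int_0^t e^{(t-s)\Delta} F(\cdot,s)\,ds$, the smoothing estimates $\|e^{t\Delta} g\|_{L^\infty} \le C t^{-N/(2q)} \|g\|_{L^q}$ give
\[
\|u(t)\|_{L^\infty} \le \|u_0\|_{L^\infty} + C \int_0^t (t-s)^{-\frac{N}{2q}} \|F(\cdot,s)\|_{L^q}\,ds,
\]
and the integral converges as long as $N/(2q) < 1$, i.e.\ $q > N/2$. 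So the target is to show $F \in L^q$ uniformly in $t$ for some $q > N/2$. Since $F = \lambda k(t)(1-u)^{-2}$ with $k(t)$ bounded (because $H(u) \ge 1$ always, and here $H(u)$ stays bounded by the assumed $L^m$ bound with $m \ge 1$, so $k$ is bounded below away from $0$; actually only the lower bound $k \le \lambda^{-1}\cdot\text{const}$... one only needs $k$ bounded \emph{above}, which is automatic), we get $\|F(\cdot,s)\|_{L^q} \le C \|(1-u)^{-2}\|_{L^q} = C\|(1-u)^{-1}\|_{L^{2q}}^2$. Taking $2q = m$ gives $q = m/2 > 3N/4 > N/2$, which is exactly the exponent needed for the Duhamel integral to converge. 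Hence $\|u(t)\|_{L^\infty}$ stays bounded on $[0,T_q)$ — but boundedness strictly below $1$ is what's needed, so I would need to be slightly more careful: run the estimate on a short interval $[T_q - \delta, T_q]$ and exploit that $\int_{T_q-\delta}^{T_q}(t-s)^{-N/(2q)}ds \to 0$ as $\delta \to 0$, so the contribution of the forcing term over a short final interval is small, while $e^{\delta\Delta}u(T_q-\delta)$ is bounded away from $1$ (since $u(T_q - \delta) < 1$ is continuous on $\overline{B_1}$ and the heat semigroup is a contraction/strict-maximum-principle smoothing). This forces $\sup_{B_1} u(\cdot,t) \le 1 - \eta$ for $t$ near $T_q$, contradicting quenching at $T_q$.

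The main obstacle I expect is the self-referential nature of the bootstrap: the bound $\|F\|_{L^q} \le C\|(1-u)^{-1}\|_{L^{2q}}^2$ only helps if $2q \le m$, and improving the integrability of $(1-u)^{-1}$ from the $L^\infty$ bound on $u$ needs $u$ to be bounded \emph{below} $1$, not just bounded — so the argument is not a clean single-pass estimate but rather a ``short-time'' closing argument near $T_q$ as sketched above, and one has to be careful that constants don't blow up as $\delta \to 0$ in the wrong way (they don't, because the singular-kernel integral $\int_0^\delta \tau^{-N/(2q)}d\tau = \delta^{1-N/(2q)}/(1-N/(2q)) \to 0$). A secondary technical point is justifying the Duhamel representation and the $L^q$--$L^\infty$ smoothing on the ball $B_1$ with Dirichlet data, and handling the radial Laplacian $\Delta_r$ including the $r=0$ regularity — but these are standard for \eqref{eqn:r1}--\eqref{ric}. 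The constraint $m > 3N/2$ (rather than $m > N$) presumably comes precisely from needing $2q > 3N/2$ to have some slack, i.e.\ $q = m/2$ must comfortably exceed $N/2$ to make the kernel integrable; one could perhaps optimize the exponent, but $3N/2$ is what the clean two-line estimate delivers, which is all that is needed for optimality of the $2/3$ exponent in \eqref{ps0a} via the scaling $\|(1-u)^{-1}\|_m^m \gtrsim \int_0^1 r^{N-1} r^{-km}\,dr$, finite iff $km < N$, i.e.\ $k < N/m \le 2/3$.
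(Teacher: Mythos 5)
Your Duhamel-based argument has a genuine gap, and it's the one you half-acknowledge at the end. Applying the heat-semigroup smoothing estimate to $u$ itself only yields $\|u(t)\|_{L^\infty} \le$ (bounded), but this is trivially true: by the definition of the solution up to $T_q$ one already has $0\le u<1$, so $\|u\|_\infty\le1$. The Duhamel estimate for $u$ gives no new information unless you can convert a finite $L^\infty$ bound into a bound \emph{strictly below} $1$, and the ``short-time closing'' device you sketch does not achieve this: as $\delta\to0$ the term $e^{\delta\Delta}u(T_q-\delta)$ approaches the quenching profile and is \emph{not} uniformly bounded away from $1$ (indeed Theorem~\ref{thm:bound} forces $1-\|u(T_q-\delta)\|_\infty\lesssim\delta^{1/3}$), so you cannot absorb the Duhamel tail. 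Put differently: boundedness of $u$ and smallness of $1-u$ live at the same place, and you have no mechanism to prise them apart by estimating $u$ alone.

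The exponent $3N/2$ is also not a matter of ``slack'' over $N/2$; your own arithmetic would give $m>N$, not $m>3N/2$, which should be a warning sign. The paper's route explains the threshold precisely: set $\theta=(1-u)^{-1}$, which satisfies $\theta_t-\Delta\theta\le f(t)\theta^4\le\lambda\theta^4$ with $\theta=1$ on $\partial\Omega$. The key is then to show that $\theta$ stays \emph{bounded} for a uniform time $\tau$ after any $t_0$ at which $\|\theta(t_0)\|_m\le\Lambda$, by comparison with the solution of $z_t-\Delta z=\lambda(1+z)^4$, $z(t_0)=\theta(t_0)$. For the semilinear heat equation with nonlinearity of degree $p=4$, the critical Lebesgue exponent for local well-posedness in $L^m$ with existence time depending only on the norm is $m_c=N(p-1)/2=3N/2$ (this is the Brezis--Cazenave/Weissler theory cited in the paper), whence $m>3N/2$. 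Since $\tau$ depends only on $\Lambda,m,\Omega,\lambda$ and not on $t_0$, taking $t_0$ within $\tau$ of $T_q$ contradicts quenching; equivalently $\|\theta(t)\|_m>\Lambda$ for all $t$ near $T_q$, which is \eqref{qbu}. This is the decisive idea your proposal lacks: estimate $\theta$, not $u$, so that the degree-$4$ nonlinearity and its critical exponent appear, and so that the desired conclusion ($\theta$ bounded, i.e.\ $u$ bounded away from $1$) is exactly what the a priori estimate delivers.
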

\begin{proof}
First note $\theta=(1-u)^{-1}$ satisfies $\theta_t-\Delta \theta
\leq f(t) \theta^4\leq \lambda \theta^4$ with $\theta=1$ on
$\pl \Omega.$ Next fix any $\Lambda>0$ and assume that $||\theta(t_0)||_m
\leq \Lambda$ for some $m> 3N/2>1$ and $t_0\in (0,T_q).$

By virtue of \cite[Theorem 15.2, Example 51.27]{QS07}, see also \cite[Theorem 1]{BC} and \cite[Theorem 1]{W1},  we have that problem
\bgee
&& z_t-\Delta z=\lambda (1+z)^4\quad \mbox{in}\quad \Omega\times (t_0,T_q)\\
&& z=0\quad \mbox{on}\quad \pl\Omega\times (t_0,T_q)\\
&&z(x,t_0)= \theta(x,t_0)\quad x\in \Omega
\egee
is well posed and in particular there exists $\tau>0$ such that
\bge\label{uest}
||z(t_0+s)||_{\infty}\leq K s^{-N/2m},\quad s\in(0,\tau],
\ege
where $K,\tau$ depend only on $\Lambda, m, \Omega, \lambda.$
By comparison $\theta$ exists and satisfies $\theta\leq z+1$
on $[t_0,t_0+\tau].$ Since $\lim_{t\to T_q}||\theta(t)||_{\infty}=\infty$
it follows that $||\theta(t)||_m>\Lambda$ for all
$t\in\left(\max(0,T_q-\tau),T_q\right)$ and thus \eqref{qbu}.
\end{proof}
\begin{rem}
It should be noted in the critical case $m = m_c = 3N/2$ that the time $\tau$
in \eqref{uest} depends on $\theta(t_0)$ and not just
on $\|\theta(t_0)\|_m$ (see \cite{BC} and
\cite[Remarks 15.4 and 16.2(iv)]{QS07}). Therefore, in this case,
it is no longer certain that $T_q<\infty$ implies
the finite-time blow-up of the norm $\|\theta(t_0)\|_m.$
\end{rem}
\begin{cor}\label{cor:bdexp}
Relation \eqref{ps0a} is not valid for any $k < 2/3.$
\end{cor}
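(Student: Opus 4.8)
The plan is to argue by contradiction, combining the pointwise lower bound (\ref{ps0a}) with the necessary blow-up of $\|(1-u)^{-1}\|_m$ established in Proposition~\ref{phs}. Suppose, for contradiction, that (\ref{ps0a}) were valid with some exponent $k < 2/3$; that is, suppose there were a constant $C(k) > 0$ with $1 - u(r,t) \geq C(k) r^k$ for all $(r,t) \in (0,1) \times (0,T_q)$. Then $\theta = (1-u)^{-1} \leq C(k)^{-1} r^{-k}$ on $(0,1) \times (0,T_q)$, and I would compute
\[
\|\theta(\cdot,t)\|_m^m = N\omega_N \int_0^1 r^{N-1}\, \theta(r,t)^m \,\dd r
\leq C(k)^{-m} N\omega_N \int_0^1 r^{N-1-km}\,\dd r .
\]
This integral is finite precisely when $N - 1 - km > -1$, i.e. when $m < N/k$. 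Since $k < 2/3$, we have $N/k > 3N/2$, so there exists some $m$ with $3N/2 < m < N/k$; for such an $m$ the right-hand side is a finite constant independent of $t$.

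That uniform bound $\sup_{0<t<T_q}\|\theta(\cdot,t)\|_m < \infty$ for an admissible $m > 3N/2$ directly contradicts Proposition~\ref{phs}, which asserts $\lim_{t\to T_q}\|(1-u)^{-1}\|_m = \infty$ for every $m > 3N/2$. (The quenching time $T_q$ is finite by the previous section, so Proposition~\ref{phs} applies.) Hence no such $k < 2/3$ can exist, and Corollary~\ref{cor:bdexp} follows. I would present this essentially as a one-paragraph deduction.

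I do not expect any serious obstacle: the only point requiring a moment's care is checking that the range $(3N/2,\, N/k)$ is nonempty, which is exactly where the threshold $2/3$ enters — if $k = 2/3$ the range degenerates to the single endpoint $m_c = 3N/2$, and the Remark following Proposition~\ref{phs} explains why the argument breaks down there (the local existence time in (\ref{uest}) then depends on $\theta(t_0)$ itself, not merely on $\|\theta(t_0)\|_{m_c}$). So the proof automatically stops exactly at $k = 2/3$, which is consistent with the claimed optimality. One should also note that for $N \geq 2$ the exponent $k = 2/3$ is itself attained with a genuine constant by Lemma~\ref{est2} (take $a \downarrow 1$), so $2/3$ is sharp and not merely an infimum; this is worth a one-line remark but is not part of proving the corollary.
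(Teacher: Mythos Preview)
Your proof is correct and essentially identical to the paper's own argument: both proceed by contradiction, use the assumed pointwise bound to control $\|(1-u)^{-1}\|_m$, and pick $m$ in the nonempty interval $(3N/2,\,N/k)$ to contradict Proposition~\ref{phs}.

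One caution regarding your closing side remark (which, as you say, is not part of the proof): in Lemma~\ref{est2} the exponent is $2/(a+1)$, so approaching $k=2/3$ requires $a\uparrow 2$, not $a\downarrow 1$; and in that limit the factor $(2-a)/(2a)$ in (\ref{ineq:epsilon2}) forces $\epsilon\to 0$, so the constant $C(k)$ degenerates. Lemma~\ref{est2} therefore genuinely delivers only $k>2/3$, and the exponent $2/3$ is not attained with a fixed positive constant.
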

\begin{proof}
First note that since $T_q = T_{max}<\infty$ it is easily seen
by the proof of Lemma \ref{est2} that \eqref{ps0a} is valid up to $T_{max}.$
Assume now that there is $k_0<2/3$ such that
\bgee
1-u(r,t)\geq C(k_0) r^{k_0}\quad\mbox{for}\quad (r,t)\in (0,1)\times (0,T_{max}]
\egee
then
\bgee
\lim_{t\to T_{max}}||(1-u)^{-1}||_m=\int_0^1 \frac{r^{N-1}}{(1-u(T_{max}))^{m}}\,dr\leq  C^{-1}(k_0) \int_0^1 r^{N-1-mk_0}\,dr<\infty
\egee
for $m > 3N/2$ close to $3N/2$, contradicting Proposition~\ref{phs}.
\end{proof}


\subsection{Lower bound of quenching rate}

We recall that considering radial decreasing initial data $u_0$ then $u$ inherits this property and hence
\bgee
M(t):= \max_{x\in\bar{B_1}} u(x,t)=u(0,t).
\egee
The next result provides a lower estimate of the quenching rate:
\begin{thm} \label{thm:bound}
The lower bound of the quenching rate of problem \eqref{eqn:r1}-\eqref{ric}
is given by
\bge\label{lb}
M(t)\geq 1-\widehat{C}(T_q-t)^{1/3}\quad\mbox{for}\quad 0<t<T_q \, ,
\ege
where $\widehat{C}$ is a positive constant independent of time $t.$
\end{thm}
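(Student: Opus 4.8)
The plan is to derive an ODE differential inequality for $M(t) = u(0,t)$ by combining the PDE evaluated near the origin with the uniform bound $H(u) \le C_2$ from Lemma~\ref{est2}. Since $u$ is radially decreasing, $\Delta_r u(0,t) = N u_{rr}(0,t) \le 0$ (the maximum in space is attained at the origin, so the Laplacian is non-positive there). Evaluating \eqref{eqn:r1} at $r=0$ gives
\[
M'(t) = u_t(0,t) = \Delta_r u(0,t) + \frac{\lambda}{(1-M(t))^2 H^2(u(\cdot,t))} \le \frac{\lambda}{(1-M(t))^2 H^2(u(\cdot,t))} \le \frac{\lambda}{C_2^2}(1-M(t))^{-2},
\]
using $H(u) \ge 1$ for the last step would go the wrong way, so instead I use $H(u) \le C_2$ in the numerator-denominator correctly: since $H^2(u) \le C_2^2$ we get $1/H^2(u) \ge 1/C_2^2$, which is again the wrong direction for an \emph{upper} bound on $M'$. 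The correct move is that $H(u) \ge 1$ always, so $1/H^2(u) \le 1$, giving $M'(t) \le \lambda (1-M(t))^{-2}$. This is the clean differential inequality I want.

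Next I would integrate this inequality backwards from the quenching time. Writing $v(t) = 1 - M(t) > 0$ on $[0,T_q)$ with $v(t) \to 0$ as $t \to T_q$, the inequality becomes $-v'(t) \le \lambda v(t)^{-2}$, i.e. $\frac{d}{dt}\big(v(t)^3\big) = 3 v(t)^2 v'(t) \ge -3\lambda$. Integrating this from $t$ to $T_q$ and using $v(T_q) = 0$ (continuity of $u$ up to and including the quenching time, which follows since $T_q = T_{max} < \infty$ as noted in the proof of Corollary~\ref{cor:bdexp}) gives $-v(t)^3 \ge -3\lambda(T_q - t)$, hence $v(t)^3 \le 3\lambda(T_q-t)$, i.e.
\[
1 - M(t) = v(t) \le (3\lambda)^{1/3}(T_q - t)^{1/3},
\]
which is exactly \eqref{lb} with $\widehat{C} = (3\lambda)^{1/3}$, a constant independent of $t$.

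The main obstacle I anticipate is justifying $\Delta_r u(0,t) \le 0$ rigorously: although $r = 0$ is the spatial maximum point, one needs enough regularity of $u$ at the origin (a genuine $C^2$ interior point of the radial profile, with $u_r(0,t) = 0$) to conclude that the Laplacian there is non-positive. This follows from parabolic regularity for the solution on $(0,T_q)$ away from quenching, together with the boundary condition $u_r(0,t)=0$ imposed precisely to guarantee smoothness at the centre; alternatively one can avoid pointwise evaluation by testing the equation against a suitable approximate-identity sequence concentrating at the origin, or by noting that $M(t)$ is locally Lipschitz (being the sup of a smooth family) and differentiating a.e. with the standard argument that $M'(t) = u_t(0,t)$ at differentiability points. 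A secondary minor point is ensuring $M(t)$ is absolutely continuous so that the fundamental theorem of calculus applies to $v(t)^3$; this again follows from the local regularity of $u$ on $[0, T_q)$ and the continuity up to $T_q$. Once these regularity technicalities are dispatched, the estimate is immediate.
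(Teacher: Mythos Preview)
Your proposal is correct and follows essentially the same route as the paper: evaluate the PDE at $r=0$, use $\Delta_r u(0,t)\le 0$ at the spatial maximum and the trivial lower bound on $H(u)$ to get $M'(t)\le \mathrm{const}\cdot(1-M(t))^{-2}$, then integrate from $t$ to $T_q$. The paper handles your regularity worry exactly as you suggest (it invokes Rademacher's theorem to differentiate the Lipschitz function $M$ a.e.) and uses the marginally sharper bound $H(u)\ge 1+N\omega_N$ instead of $H(u)\ge 1$; note in particular that Lemma~\ref{est2} is not needed here, as you yourself realized mid-argument.
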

\begin{proof}
It can be easily checked that the function $M(t)$ is Lipschitz continuous
and hence, by Rademacher's theorem, is almost everywhere differentiable,
see \cite{FM, KN07}. Furthermore, since $u$ is decreasing in $r$,
$\Delta_r u(0,t)\leq 0$ for all $t\in(0,T_q)$. Therefore, for any $t$
where $\dd M/\dd t$ exists, we have
\bgee
\dif Mt \leq \la \frac{(1-M(t))^{-2}}{\left( 1+ \int_{B_1}
\frac{1}{1-u} \Ix \right)^2} \leq \la \frac{(1-M(t))^{-2}}
{\left( 1 + N \omega_N \right)^2} \quad \mbox{for a.e.} \quad t\in(0,T_q),
\egee
which yields
\bgee
\int_{M(t)}^1 (1-s)^2 \I s \leq \la C (T_q-t),
\egee
for $C=1/(1 + N \omega_N)^2$, giving the desired estimate
\bgee
M(t) \geq 1 - \widehat{C}(T_q-t)^{1/3} \quad \mbox{for} \quad 0<t<T_q \, ,
\egee
where $\widehat{C}=(3\la C)^{1/3}.$
\end{proof}


\section{Numerical Results}\label{nr}

\setcounter{equation}{0}


We now carry out a brief numerical study of problem
(\ref{eqn:1})-(\ref{ic}) for the one-dimensional case.
Here the form of the problem is taken as

\begin{eqnarray}\label{pru}
&u_t = u_{xx} \displaystyle
+ \frac{\lambda}{\left(1-u \right)^2 \left( 1+
\int_{0}^{1}\frac{1}{1-u} \Ix\right)^2},
\quad 0<x<1,\quad t>0,\\
&u(0,t)=0,\quad u(1,t)=0,\nonumber\\
&u(x,0)=u_0(x).\nonumber
\end{eqnarray}

A moving mesh adaptive method, based on the techniques suggested in [4],
is used. This captures the behaviour of the solution near a singularity.
More specifically we take initially a partition of $M+1$ points in $[0,1]$,
$\xi_0 = 0, \, \xi_0 + \delta \xi = \xi_1,\cdots, \, \xi_M=1$.
For the solution $u=u(x,t),$ we introduce a computational coordinate
$\xi$ in the interval $[0,1]$ and
we consider the mesh points $X_i$ to be the images of the points
$\xi_i$ (uniform mesh) under the map $x(\xi,t)$ so that
$X_i(t)= x(i\delta \xi,t)$.
Given this transformation, we have, for the approximation of the solution
$u_i(t)\simeq u(X_i(t),t),$ that
$\displaystyle \dif{u(X_i(t),t)}t = u_t(X_i,t)+u_x \dot X_i$ or
$\displaystyle u_t= \dif ut -u_x x_t$.

The way that the map, $x(\xi,t),$ is determined is controlled
by the monitor function ${\mathcal M}(u)$ which, in a sense,
follows the evolution of the singularity. This function is
determined by the scale invariants of the problem, \cite{Budd}.
In our case, for the semilinear parabolic equation of the form
$v_{t}=v_{xx} - \lambda/(v^2(1+\int_{0}^{1} v^{-2} \Ix )^2)$
for $v=1-u$, an appropriate monitor function should be
${\mathcal M}(v)=|v|^{-2}$ in this case.

At the same time we need also a rescaling of time  of the form
$\displaystyle \dif ut = \dif u\tau \dif\tau t$
for $\displaystyle \dif t\tau =g(u)$,
where $g(u)$ is a function determining the way that the time scale
changes as the solution approaches the singularity,
and is given by $g(u)= 1/\|{\mathcal M}(u)\|_{\infty}$ (again see \cite{Budd}).

In addition the evolution of $X_i(t)$ is given by a moving mesh PDE 
which has the form $-x_{\tau\xi\xi} = \varepsilon^{-1} g(u)
\left({\mathcal M}(u)x_{\xi} \right)_{\xi}$. Here $\varepsilon$
is a small parameter accounting for the relaxation time scale.

Thus finally we obtain a system of ODE's for $X_i$ and $u_i$
and the ODE system takes the form
\begin{eqnarray*}
\dif t\tau & = & g(u),\\
u_{\tau}-x_{\tau}u_x & = &
g(u) \left(u_{xx} +
\frac{\lambda}{\left(1-u \right)^2\left( 1 + \int_{0}^{1}
\frac{1}{1-u} \Ix \right)^2} \right),\\
- x_{\tau\xi\xi} &=& \frac{g(u)}{\varepsilon}
\left( {\mathcal M}(u)x_{\xi}\right)_{\xi}.
\end{eqnarray*}

We may apply now a discretization in space and we have
 \begin{eqnarray*}
u_x(X_i,\tau) &\simeq & \delta_x u_i(\tau):= \frac{u_{i+1}(\tau) - u_{i-1}(\tau) }{X_{i+1}(\tau) - X_{i-1}(\tau)},\\
u_{xx}(X_i,\tau) &\simeq & \delta_x^2 u_i(\tau):=
\left( \frac{u_{i+1}(\tau) - u_{i}(\tau) }{X_{i+1}(\tau) - X_{i}(\tau)}-
 \frac{u_{i}(\tau) - u_{i-1}(\tau) }{X_{i}(\tau) - X_{i-1}(\tau)}\right)\frac{2} {X_{i+1}(\tau) - X_{i-1}(\tau)},\\
x_{\xi\xi}(\xi_i,\tau) &\simeq &  \delta_{\xi}^2 x_i(\tau):= \frac{X_{i+1}(\tau) - 2X_{i}(\tau) +X_{i-1}(\tau)}{\delta\xi^2},\\
 \left( {\mathcal M}(u)x_{\xi}\right)_{\xi} &\simeq &  \delta_{\xi} ({\mathcal M}\delta_{\xi} x):=
 \left(\frac{{\mathcal M}_{i+1}-{\mathcal M}_{i}}{2}\frac{x_{i+1}-x_{i}}{\delta\xi} -
\frac{{\mathcal M}_{i}-{\mathcal M}_{i-1}}{2}\frac{x_{i}-x_{i-1}}{\delta\xi}  \right)\frac{1}{\delta\xi}.
\end{eqnarray*}

Therefore the resulting ODE system to be solved, for
\begin{eqnarray*}
y&=&\left(t(\tau),  u_1(\tau), u_2(\tau),\ldots  u_M(\tau), X_1(\tau), X_2(\tau),\ldots  X_M(\tau) \right),\\
&=&\left(t(\tau), {\bf u}, {\bf X}\right),\quad\quad   \,{\bf u},\, {\bf X}\in\mathbb{R}^M,
\end{eqnarray*}
will have the form
  \begin{eqnarray*}
A(\tau,y )\dif y\tau = b(\tau,y),
\end{eqnarray*}
where the matrix $A\in\mathbb{R}^{2n+1}$ has the block form
   \begin{eqnarray*}
A=\left[ \begin{array}{ccc}
 1 & 0 &  0\\
  0 & I  & -\delta_x u\\
  0 & 0  & -\delta_{\xi}^2
\end{array}
\right],\quad
y=\left[ \begin{array}{c}
 t(\tau)\\
 {\bf u}\\
  {\bf X}
\end{array}
\right],
\quad
b=g(u)\left[ \begin{array}{c}
1\\
\delta_x^2 {\bf u} + \lambda \frac{1}{\left(1-{\bf u}\right)^2
\left(1+{\mathcal I}(u)\right)^2}\\
\delta_{\xi} ({\mathcal M}\delta x_{\xi})
\end{array}
\right].
\end{eqnarray*}
where ${\mathcal I}(u)$ is an approximation of the integral
$\int_{0}^{1}\frac{1}{1-u} \Ix$, using, for example, Simpson's rule.

For the solution of the above system a standard ODE solver,
such as the matlab function ``ode15i", can be used.

\

We first plot, in Figure~\ref{fig1}, the solution for $\lambda>\lambda^*=8.533$, \cite{KLNT11}, to give quenching.
The initial data are taken to be zero in this case; we use $M=141$.
In can be observed that the solutions flattens for a time (this will
be while it lies close to the steady state corresponding to
$\la = \la^*$).
\begin{figure}[h]
\input{epsf}
\begin{center}
\epsfysize=6cm \epsfxsize=10cm \epsfbox{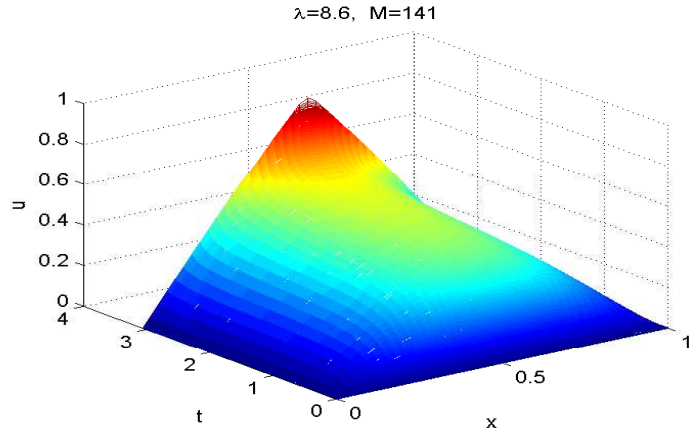}
\vspace*{.6cm}
\end{center}
\caption{ \it The numerical solution of problem (\ref{pru})
with $u_0 \equiv 0$, for $\lambda=8.6$ and taking $M=141$. \label{fig1}}
\end{figure}

Using now $\lambda=10$, we can see the evolution of the solution profile
against space for various times in Figure~\ref{fig:prof}. Again we take
$u_0 \equiv 0$ and use $M=141$.
\begin{figure}[h]
\input{epsf}
\begin{center}
\epsfysize=6cm \epsfxsize=10cm \epsfbox{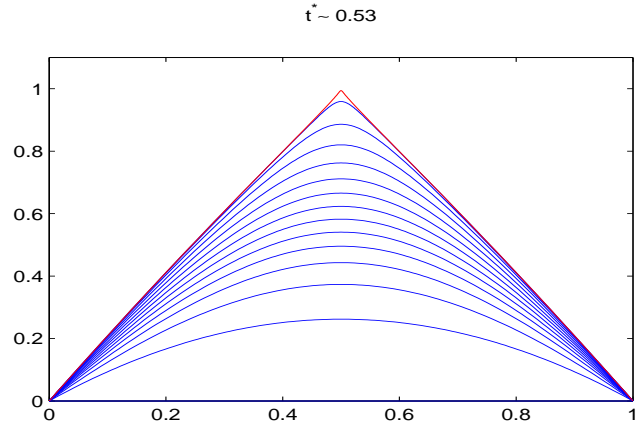}
\vspace*{.6cm}
\end{center}
\caption{ \it  Profile of the  numerical solution of problem
(\ref{pru}) against $x$ for $\la = 10$ and $u_0 \equiv 0$,
taking $M=141$. \label{fig:prof}}
\end{figure}

Regarding the behaviour of the solution near quenching with respect to time,
our numerical simulations indicate that we get an approximate $t^{1/3}$
dependence. More precisely near the quenching time $T=T_q$,
$\ln \left( 1 -u(\frac12,t)
\right) \propto \ln\left(T-t\right)$ with constant
of proportionality $\frac13$. This is demonstrated in
Figure~\ref{figq3}. For the local spatial dependence, a plot of $\ln u(x,T)$
against  $\ln (x - \frac 12)$, in
Figure~\ref{figq4}, shows that $u(x,T)$ behaves approximately like
$\sim C(x - \frac12)^{\frac23}$ near quenching.
These numerical results agree both with the bound of Theorem~\ref{thm:bound}
and the asymptotic results on quenching of
\cite{FG, GHW08, G08, YG-ZP-MJW06}. (Note that a more accurate
local asymptotic form of the quenching profile is expected to be, \cite{FG},
$u \sim C(x - \frac12)^{\frac23} |\ln (x - \frac 12|)|^{- \frac 13}$.
Note also that the local quenching behaviour of our non-local
problem is expected to be like that of the standard problem, (\ref{local}),
because of the boundedness of the integral in the non-local term,
Lemma~\ref{est2}.)

\begin{figure}[h]
\input{epsf}
\begin{center}
\epsfysize=6cm \epsfxsize=10cm \epsfbox{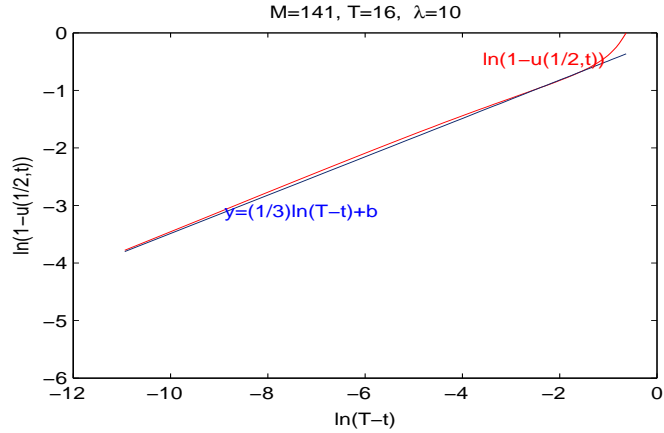}
\vspace*{.6cm}
\end{center}
\caption{ \it Plot of $y = \ln\left(1-u(\frac12,t)\right)$
(red curve) against $ \ln\left(T - t\right)$ for $\lambda=10$.
The straight line (blue) has slope $\frac13$
and indicates good agreement between $1-u(\frac12,t)$
and const.$\times (T-t)^{\frac13}$.  }\label{figq3}
\end{figure}


\begin{figure}[h]
\input{epsf}
\begin{center}
\epsfysize=6cm \epsfxsize=12cm \epsfbox{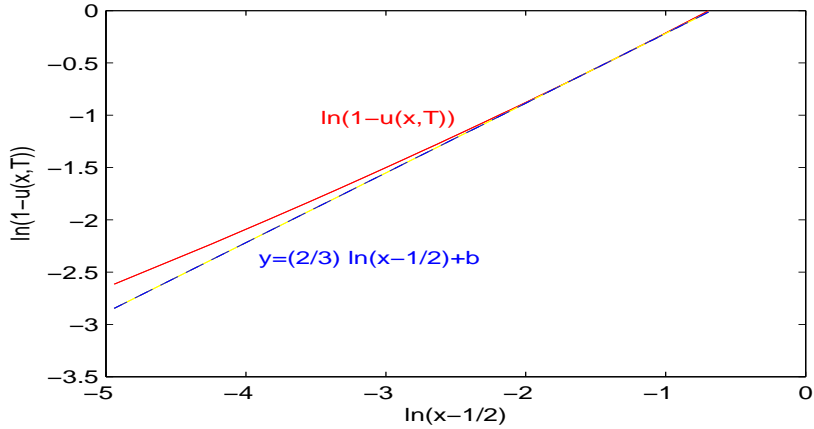} \vspace*{.6cm}
\end{center}
\caption{ \it { Plot of $\ln(1-u(x,T))$ (solid curve) against
$\ln (x - \frac 12)$, for $\lambda=10$.
The blue dashed line has slope $2/3$. }}
\label{figq4}
\end{figure}



We look briefly at the radial symmetric problem in two dimensions,
\begin{eqnarray}
u_t & = &u_{rr} + \frac{1}{r}u_r+
\frac{\lambda}{\left(1-u \right)^2\left(1+\, 4\pi\int_{0}^{1}
\frac{r}{1-u} \I r\right)^2},
\quad 0<r<1,\quad t>0 \, , \nonumber \\
 u_r(0,t) & = & 0,\quad u(1,t)=0 \, , \label{pro} \\
 u(r,0) & = & u_0(r) \, , \nonumber
\end{eqnarray}
taking, for a change, $\alpha = 2$.

We plot the solution for $\lambda=71$ in Figure~\ref{fig2D}.
Figure \ref{radprof} shows the profile of the solution for various times.
We again take $u_0 \equiv 0$ and use $M=141$.
We see that the behaviour is very similar to the one-dimensional problem.
Temporary flattening can again be observed.
\begin{figure}[h]
\input{epsf}
\begin{center}
\epsfysize=6cm \epsfxsize=10cm \epsfbox{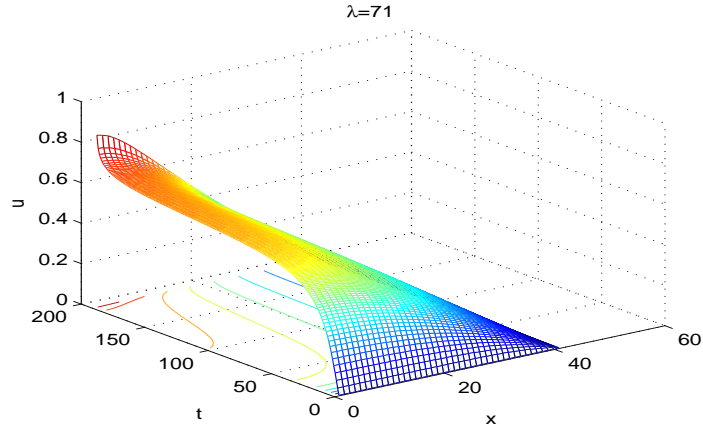}
\vspace*{.6cm}
\end{center}
\caption{ \it The numerical solution of problem (\ref{pro}),
for $\lambda=71$ with $u_0 \equiv 0$ using $M=141$. \label{fig2D}}
\end{figure}

\begin{figure}[h]
\input{epsf}
\begin{center}
\epsfysize=6cm \epsfxsize=10cm \epsfbox{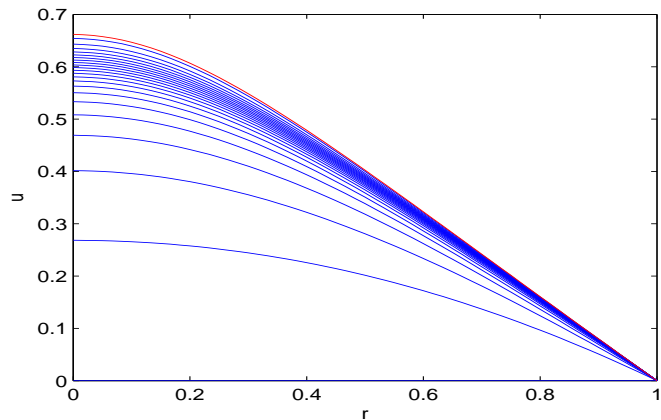}
\vspace*{.6cm}
\end{center}
\caption{ \it Profiles for various times of the numerical solution of problem (\ref{pro}),
for $\lambda=71$ with $u_0 \equiv 0$ using $M=141$. \label{radprof}}
\end{figure}


\section{Discussion}\label{sec:discuss}

Our results clearly easily extend to other problems of a similar
form. For example, the analysis holds
with $F(r,t)$, the right-hand side in (\ref{eqn:r1}),
again given by $F(r,t) = \la  k(t) (1 - u(r,t))^{-2}$ now with $k(t) =
\calF (\int_\Omega (1-u)^{-1} \Ix )$ and $\calF (s)$ positive
and satisfying $\calF (s) \gg s^{-(a+1)}$ as $s \to \infty$ for some
$a \in (1,2)$. Other non-linear functions of $u$ multiplying $k(t)$
are also possible.

Local behaviour which has been established for the ``standard''
problem should be expected to carry over to these non-local
problems on account of the boundedness of $\int_\Omega (1-u)^{-1} \Ix$.

We also expect the results to hold for non-monotone initial data in the unit
ball and for asymmetric problems in more general domains $\Omega$.

\setcounter{equation}{0}
\subsection*{Acknowledgement}The authors would like to thank
the anonymous referee for their stimulating comments.  In particular,
their suggestions and recommendations regarding Theorem~\ref{que1},
Proposition~\ref{phs} and Corollary~\ref{cor:bdexp} significantly
improved the paper.




\end{document}